\newtheorem{thm}{Theorem}[section]
\newtheorem{coro}[thm]{Corollary}
\newtheorem{prop}[thm]{Proposition}
\newtheorem{remark}[thm]{Remark}
\numberwithin{equation}{section}
\newcommand{\ep}{\epsilon}
\newcommand{\al}{\alpha}
\newcommand{\intot}{\int_{0}^{t}}
\newcommand{\intos}{\int_{0}^{s}}
\newcommand{\supt}{\sup\limits_{0\leq t\leq T}}
\newcommand{\supe}{\sup\limits_{0<\epsilon\leq 1}}
\newcommand{\sxm}{X^\epsilon}
\newcommand{\xe}{X^\epsilon}
\newcommand{\ye}{Y^\epsilon}
\newcommand{\xei}{X^{\epsilon,i}}
\newcommand{\xel}{X^{\epsilon,l}}
\newcommand{\yei}{Y^{\epsilon,i}}
\newcommand{\yej}{Y^{\epsilon,j}}
\newcommand{\yek}{Y^{\epsilon,k}}
\newcommand{\yel}{Y^{\epsilon,l}}
\newcommand{\mi}{M^{-1}}
\newcommand{\phie}{\phi^\epsilon}
\newcommand{\zr}{z^{\epsilon,r}}
\title{\textbf{On the Approximation of Differential Equations Driven by Some  Random Processes  as Rough Paths}\thanks{This work is supported by NSFC Grant No.  12371243.}}
\author{Qingming Zhao\thanks{qingming.zhao@smail.nju.edu.cn}~~}
\author{Xueru Liu\thanks{dg21210009@smail.nju.edu.cn}~~}
\author{~Wei Wang\thanks{Corresponding author:wangweinju@nju.edu.cn}}
\affil{School of Mathematics, Nanjing University, Nanjing 210093, P. R. China}
\date{}
\begin{document}
	\maketitle

	\noindent{\small{\hspace{1.1cm} }}

	\noindent \textbf{Abstract~~~}   We explore the limit of stochastic differential equations driven by some random processes satisfying singularly perturbed second order stochastic differential equations. The main tool we employ is the universal limit theorem in rough path theory. To this end, we lift the  random process as a rough path in a natural manner. After suitable change-of-variable, the random process has a form of slow-fast system. Moment estimates of both the random process and its lift are given, followed by which, averaging technique and convergence theorem in rough path topology are used to identify the limit.
	\\[2mm]
	\textbf{Keywords~~~} {Smoluchowski--Kramers approximation}, {Averaging}, {Singular perturbation}, {Rough path theory}
	\\[2mm]
	\\
	\textbf{2020 Mathematics Subject Classification~~~}60L90, 60H10

	\section{Introduction}
	Let us consider approximation of stochastic differential equations (SDE) of the form
	\begin{equation}\label{randomode}
		dZ^\ep_t=b(Z^\ep_t)dt+\sigma(Z^\ep_t)dX^\ep_t,
	\end{equation}
	 where $X^\ep$ is some random process converging to  $X$ in some sense. At first glance, one might expect that $Z^\ep\to Z$ with
	  \begin{equation*}
	 	dZ_t=b(Z_t)dt+\sigma(Z_t)dX_t.
	 \end{equation*}
	  However, this is not true in general, as seen in the classical Wong--Zakai approximation, and there is an additional term called It\^o--Stratonovich correction, appearing in the limit equation\cite{WONGZAKAI}.
	
	 In the pioneering work \cite{FGL15}, they assumed that the process $\xe$ is governed via the following Langevin equation
	\begin{equation}\label{simplelangevin}
		\ep^2\ddot{X}^\ep_t+M\dot{X}^\ep_t=\dot{W}_t,\quad \xe_0=0,\enspace\dot{X}^\ep_0=0.
	\end{equation}
	The process $X^\ep$ is also called {\it physical Brownian motion}~(pBm). 
	This equation describes the motion of a particle with mass $\ep^2$ under a random external force $W$, classically assumed to be a standard (mathematical) Brownian motion, and $M$ is the coefficient of friction. They firstly showed that $\xe\to X:=M^{-1}W$, which can be formally obtained by setting $\ep=0$ in (\ref{simplelangevin}), and this type of convergence is known as Smoluchowski--Kramers (SK) approximation\cite{CF06A,CF06B,LW25, chaos24, FGL15}. Then, they found that, the solution $Z^\ep$ of (\ref{randomode}), converges to a non-trivial limit. The reason lies in the fact that, in the context of rough path theory, the It\^o lift of $\xe$ does not converge to the It\^o lift of $X$.

	However, in the physical model, it commonly happens that the friction coefficient $M$ is not a constant, and there is a non-linear potential term $F$.   In this paper, we explore the limit of (\ref{randomode}) with the driving signal $\xe$  modeled by a more general stochastic differential equation 
	\begin{equation}\label{SKm}
		\ep^2\ddot{X}^\ep_t+M(X^\ep_t)\dot{X}^\ep_t=F(X^\ep_t)+\dot{W}_t,\enspace X^\ep_0=0,\enspace \dot{X}^\ep_0=0.
	\end{equation}
	We call it a general   physical Brownian motion. 
	Setting the velocity $V^\ep_t=\dot{X}^\ep_t$ and momentum $P^\ep_t=\ep^2 V^\ep_t$ respectively,  (\ref{SKm}) is equivalent to 
	\begin{equation}\label{SKmv}
		\begin{cases}
			d\sxm_t=V^\ep_tdt,\enspace \sxm_0=0,\\
			dV^\ep_t=-\frac{1}{\ep^2}M(\sxm_t)V^\ep_tdt+\frac{1}{\ep^2}F(\sxm_t)dt+\frac{1}{\ep^2}dW_t,\enspace V^\ep_0=0,
		\end{cases}
	\end{equation}
	and
	\begin{equation}\label{SKmp}
		\begin{cases}
			d\sxm_t=\frac{1}{\ep^2}P^\ep_tdt,\enspace \sxm_0=0,\\
			dP^\ep_t=-\frac{1}{\ep^2}M(\sxm_t)P^\ep_tdt+F(\sxm_t)dt+dW_t,\enspace P^\ep_0=0.
		\end{cases}
	\end{equation} 
	We also introduce $Y^\ep_t:=\frac{1}{\ep}P^\ep_t$~\cite[Page 7944]{FGL15}, so that
	\begin{equation}\label{SKxy}
		\begin{cases}
			d\xe_t=\frac{1}{\ep}\ye_tdt,\enspace\xe_0=0,\\
			d\ye_t=-\frac{1}{\ep^2}M(\xe_t)\ye_tdt+\frac{1}{\ep}F(\xe_t)dt+\frac{1}{\ep}dW_t,\enspace\ye_0=0.
		\end{cases}
	\end{equation}
	The system (\ref{SKxy}) has a form of slow-fast system \cite{DW14}, making it possible to find its limit by averaging method. 
	We always  assume that equations (\ref{SKm}) and (\ref{randomode}) are defined on a complete stochastic basis $(\Omega,\mathcal{F},(\mathcal{F}_t)_{0\leq t\leq T},P)$ with $0<T<\infty$ fixed, and $W$ is a $d$-dimensional standard Brownian motion. Hypothesis on other coefficients are detailed in Section \ref{section-preli}. We also remark that, since $\xe$ is of finite variation which can be seen from (\ref{SKmv}) , (\ref{randomode}) is in fact a random ODE. 
	
	The universal limit theorem (also known as continuity of It\^o--Lyons map) in rough path theory~\cite{Lyo98} indicates that, in order to identify the limit of (\ref{randomode}), it is necessary to find the limit of the lift $\mathbf{X}^\ep=(\xe,\mathbb{X}^\ep)$ of $\xe$, detailed in Section \ref{section-preli}. To this end, we will firstly calculate the limit of $\xe$. In contrast to the constant friction coefficient case, we cannot obtain the limit by simply dropping the small mass term $\ep^2\ddot{X}^\ep$ in (\ref{SKm}). In fact, Hottovy et al.\cite{HMV15} showed   that, under certain assumptions,
	$$\lim_{\ep\rightarrow 0}\mathbb{E}\supt|X^\ep_t-X_t|^2=0,$$ 
	where
	\begin{equation}\label{level1limit}
		dX_t=S(X_t)dt+\mi(X_t)F(X_t)dt+\mi(X_t)dW_t,\enspace X_0=0,
	\end{equation}
	and
	\begin{equation}\label{sjdef}
		S_j(x)=\sum_{k,l}(\partial_{x_l}\mi)_{jk}(x)J_{kl}(x),
	\end{equation} 
	with $J$ the solution of Lyapunov equation
	\begin{equation}\label{lyapunov}
		M(x)J(x)+J(x)M^\top(x)=\mathrm{I}_d.
	\end{equation} 
	Here $\mathrm{I}_d$ is the identity matrix in $\mathbb{R}^{d\times d}.$ For more research in SK approximation in position-dependent friction case, we refer to \cite{CX22, HOTT13, LW23, WW23, SWW24}. As we see in (\ref{SKxy}), SK approximation is naturally linked to stochastic averaging, initiated by Khaminskii \cite{KHAMINSKII}. Numerous work concerning about stochastic averaging appeared in recent years, see e.g. \cite{LGQ25,WRD12, XDX20, XLL21}. There is also research about averaging principle in rough path framework, see for example in \cite{LLP25,BIX23}.   Therefore, unlike the existing literature, we use a stochastic averaging method to obtain the limit in a much stronger sense. The next step is to identify the limit of $\mathbb{X}^\ep$, which is more delicate. After these work, we are able to find the effective approximation of (\ref{randomode}) by utilizing universal limit theorem.

	Our current article differs from these existing literature in at least three aspects. The first is that, our main theorems show the convergence of the lifted process $\mathbf{X}^\ep$ of $\xe$ in the sense of $p$-th moment of $\al$-H\"older inhomogeneous rough path distance with $1/3<\al<1/2$. This is much stronger than \cite{HMV15} and \cite{SWW24}, in which they showed the convergence of the original process $\xe$ in $L^2(\Omega;C([0,T];\mathbb{R}^d))$ and convergence in distribution, respectively. The second is that, in \cite{HMV15}, they essentially assume the tightness of $(\xe)_{0<\ep\leq 1}$ in $C([0,T];\mathbb{R}^d).$ In our paper, we drop this assumption, and obtain a much stronger result, that is, $(\mathbf{X}^\ep)_{0<\ep\leq 1}$ is tight in  $\mathscr{C}^{0,\al}([0,T];\mathbb{R}^d)$, the closure of the smooth rough paths in $\al$-H\"older rough path space $\mathscr{C}^{\al}([0,T];\mathbb{R}^d)$, which is a corollary of the boundedness of $(\mathbf{X}^\ep)_{0<\ep\leq 1}$ in $\mathscr{C}^{\al}([0,T];\mathbb{R}^d)$. In \cite{FGL15}, they showed the boundedness by utilizing the Gaussian property of $V^\ep$ in the constant friction case. However, in our setting, $V^\ep$ is non-Gaussian due to the fact that the friction $M$ depends on the position, which makes the proof more challenging. The third is that, as a consequence of the variable friction again, $V^\ep$ is non-Markovian, so we cannot pass the limit by a direct application of ergodic theorem when calculating the limit of $\mathbb{X}^\ep$ as in \cite{FGL15}. To fix this, we prove a strong averaging principle for a specific slow-fast system. We will see in Section \ref{section-aver} that our slow system is only locally Lipschitz, which does not satisfy the commonly assumed condition in the aforementioned reference. Thanks to the structure of our slow system, we employ Poisson equation technique\cite{PAR2001,PAR2003, PAR2005} to show the convergence in $L^p$ sense.

	 The rest of this paper is organized as follows. In Section \ref{section-preli}, we recall some basic rough path theory, list some notations, and present our main results. In Section \ref{section-estimate}, we give moment estimate in rough path norm. In Section \ref{section-aver}, we display an averaging principle for specific slow-fast system, which is used to pass the limit. After these preparation, we prove the main theorems in Section \ref{section-proof}.

	\section{Preliminary and Main Results}\label{section-preli}
	
 We firstly recall some basic definitions of rough path theory~\cite[e.g.]{FH20}.  For systematic display of rough path theory, we refer to \cite{LQ02,LCL07,FV10,FH20}. 
 
 Let $X:[0,T]\to \mathbb{R}^d,$ $\mathbb{X}:\Delta_{[0,T]}\to\mathbb{R}^{d\times d}$, where $$\Delta_{[0,T]}:=\{(s,t)\in [0,T]^2:s\leq t\}.$$ Given $\frac{1}{3}<\al\leq\frac{1}{2},$  a map $\mathbf{X}=(X,\mathbb{X})$ is called an $\al$-H\"older rough path over $\mathbb{R}^d$, if the following conditions hold:
	
	(i) $||X||_\al:=\sup_{0\leq s\leq t\leq T}\limits\frac{|X_t-X_s|}{|t-s|^\al}<\infty,$
	
	(ii) $||\mathbb{X}||_{2\al}:=\sup_{0\leq s\leq t\leq T}\limits\frac{|\mathbb{X}_{s,t}|}{|t-s|^{2\al}}<\infty,$
	
	(iii) (Chen relation) $\mathbb{X}_{s,u}=\mathbb{X}_{s,t}+\mathbb{X}_{t,u}+X_{s,t}\otimes X_{t,u}, \enspace 0\leq s\leq t\leq u\leq T,$
	
	\noindent where $X_{s,t}:=X_t-X_s$ for $(s,t)\in \Delta_{[0,T]}.$ 
	
	The set of all $\al$-H\"older rough path over $\mathbb{R}^d$ is denoted by $\mathscr{C}^\al([0,T];\mathbb{R}^d).$ Note that this space is not a linear space due to the nonlinear Chen relation. Given $\mathbf{X},\mathbf{Y}\in\mathscr{C}^\al([0,T];\mathbb{R}^d),$ the inhomogeneous $\al$-H\"older rough path metric is defined as 
	$$\rho_\al(\mathbf{X},\mathbf{Y}):=||X-Y||_\al+||\mathbb{X}-\mathbb{Y}||_{2\al}=\sup_{0\leq s\leq t\leq T}\frac{|X_{s,t}-Y_{s,t}|}{|t-s|^\al}+\sup_{0\leq s\leq t\leq T}\frac{|\mathbb{X}_{s,t}-\mathbb{Y}_{s,t}|}{|t-s|^{2\al}}.$$
	This metric is inhomogeneous with respect to the dilation operator $\delta_\lambda$ ($\lambda\in\mathbb{R}$), defined by $\delta_\lambda ((a,b)):=(\lambda a,\lambda^2 b),\enspace a\in \mathbb{R}^d, b\in\mathbb{R}^{d\times d}.$ Besides, the $\al$-H\"older homogeneous rough path norm is defined by $$|||\mathbf{X}|||_\al:=||X||_\al+\sqrt{||\mathbb{X}||_{2\al}}.$$

	Next we recall concepts of rough differential equations (RDE). We adopt the definition in \cite[Section 8.7]{FH20}.  Let $b\in C^3_b(\mathbb{R}^e;\mathbb{R}^e)$, $\sigma\in C^3_b(\mathbb{R}^e;\mathbb{R}^{e\times d})$, that is, they are $n$ times continuously differentiable with bounded $k$-th derivative, $k=1,2,3.$  By a solution of the RDE
	\begin{equation}\label{basicrde}
		dZ_t=b(Z_t)dt+\sigma(Z_t)d\mathbf{X}_t,\quad Z_0=\xi,
	\end{equation}
	 we mean that
	$$Z_t-Z_s=b(Z_s)(t-s)+\sigma(Z_s)X_{s,t}+D\sigma(Z_s)\sigma(Z_s)\mathbb{X}_{s,t}+R_{s,t}, \quad (s,t)\in\Delta_{[0,T]},$$
	 and $$\lim\limits_{|\mathcal{P}|\to 0}\sup\limits_{\mathcal{P}\subset [0,T]}\sum\limits_{[s,t]\in\mathcal{P}}|R_{s,t}|=0,$$ here $\mathcal{P}\subset [0,T]$ means $\mathcal{P}$ is a partition of $[0,T]$, and $|\mathcal{P}|$ is the mesh of $\mathcal{P}$. It turns out that, the It\^o--Lyons map of (\ref{basicrde}) is locally Lipschitz with respect to $\rho_\al$\cite[Thereom 8.5]{FH20}. To be precise, if $\tilde{Z}$ is the solution of
	 \begin{equation}
	 	d\tilde{Z}_t=b(\tilde{Z}_t)dt+\sigma(\tilde{Z}_t)d\tilde{\mathbf{X}}_t,\quad \tilde{Z}_0=\tilde{\xi},
	 \end{equation}
	and $M$ is such that $\max\{|||\mathbf{X}|||_\al,|||\tilde{\mathbf{X}}|||_\al\}\leq M,$ then there exists a constant $C=C(\al,b,\sigma,M)$ such that $$||Z-\tilde{Z}||_\alpha\leq C(|\xi-\tilde{\xi}|+\rho_\al(\mathbf{X},\mathbf{\tilde{X}})).$$

	With preliminaries above, we are ready to display our main results. For each $0<\ep\leq 1,$ define the second-order process by
	\begin{equation}\label{second-order-process}
		\mathbb{X}^\ep_{s,t}:=\int_s^t \xe_{s,u}\otimes d\xe_u,\quad (s,t)\in \Delta_{[0,T]}.
	\end{equation}  Since $X^\ep$ is of finite variation, the integral above is in the sense of Riemann--Stieltjes. Setting $\mathbf{X}^\ep=(\xe,\mathbb{X}^\ep),$ it is straightforward to check that $\mathbf{X}^\ep$ satisfies Chen relation, and that $\mathbf{X}^\ep$ lies in $\mathscr{C}^\al([0,T];\mathbb{R}^d)$ for all $\frac{1}{3}<\al<\frac{1}{2}.$ 
	
	
We impose the following assumptions:
	
	\noindent($\mathbf{A1}$) There exists a constant $\lambda>0$, such that the smallest eigenvalue $\lambda_1$ of $\frac{M+M^\top}{2}$ satisfies that $$\lambda_1(x)\geq \lambda, \quad x\in\mathbb{R}^d.$$
	
	\noindent($\mathbf{A2}$) $M\in C^2(\mathbb{R}^d;\mathbb{R}^{d\times d})$, and $\mi\in C^3(\mathbb{R}^d;\mathbb{R}^{d\times d})$.
	
	\noindent($\mathbf{A3}$) $M$, $\mi$ and $\partial_{x_j}\mi_{ik}$ are globally Lipschitz, and $\mi$ is bounded, $1\leq i,j,k\leq d$.
	
	\noindent($\mathbf{A4}$) $F$ is globally Lipschitz and bounded.
	\begin{remark}
		From ($\mathbf{A1}$), one finds that
		$$y^\top M(x)y\geq \lambda|y|^2,\quad (x,y)\in\mathbb{R}^d\times\mathbb{R}^d,$$ which is essential in moment estimates. ($\mathbf{A2}$) is used to obtain  regularity of the solution of Poisson equation in Section \ref{section-aver}. Well-posedness of (\ref{level1limit}) and (\ref{SKxy}) are guaranteed by ($\mathbf{A3}$) and ($\mathbf{A4}$). 
	\end{remark}
	
	Now we state our main theorems.
	\begin{thm}\label{maintheorem}
		Let $X$ be the solution of (\ref{level1limit}). Define
		\begin{equation}\label{defofbbx}
			\mathbb{X}_{s,t}:=\int_s^t X_{s,u}\otimes\circ dX_u+\frac{1}{2}\int_s^t J(X_{u})(\mi)^\top(X_{u})-\mi(X_{u})J(X_{u})du,
		\end{equation}
		where $J$ is the solution of (\ref{lyapunov}), and $\otimes\circ$  denotes  the Stratonovich integral.  Under assumptions $(\mathbf{A1})$--$(\mathbf{A4})$, for each $\frac{1}{3}<\al<\frac{1}{2}$ and $1\leq p<\infty,$
		$$\lim\limits_{\ep\to0}\mathbb{E}[\rho_\alpha(\mathbf{X}^\ep,\mathbf{X})^p]=0,$$ where $\mathbf{X}=(X,\mathbb{X}).$ 
	\end{thm}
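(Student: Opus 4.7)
The plan is to exploit the fast-slow structure of (\ref{SKxy}) together with a Poisson-equation substitution in order to decompose both $\xe$ and its It\^o lift $\mathbb{X}^\ep$ into pieces whose $\ep\to 0$ behaviour can be treated by the averaging principle of Section~\ref{section-aver}. At frozen $x$, the fast OU dynamics underlying (\ref{SKxy}) has generator $\mathcal{L}^x=\tfrac12\Delta_y-(M(x)y)\cdot\nabla_y$, whose unique invariant measure is $\mathcal{N}(0,J(x))$ by (\ref{lyapunov}); moreover the vector Poisson equation $\mathcal{L}^x\Phi=y$ has the explicit solution $\Phi(x,y)=-\mi(x)y$. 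Applying It\^o's formula to $\ep\Phi(\xe_t,\ye_t)=-\ep\mi(\xe_t)\ye_t$ and using $d\xe_t=\tfrac1\ep\ye_t\,dt$, a direct calculation yields
\begin{equation*}
	\xe_t = -\ep\mi(\xe_t)\ye_t + \intot \Xi(\xe_s,\ye_s)\,ds+\intot\mi(\xe_s)F(\xe_s)\,ds+\intot\mi(\xe_s)\,dW_s,
\end{equation*}
where $\Xi_i(x,y):=\sum_{j,k}\partial_{x_j}(\mi)_{ik}(x)y_jy_k$. Since $\mathbb{E}_{\mu^x}[y_jy_k]=J_{jk}(x)$ and $J$ is symmetric, $\int\Xi(x,y)\,\mu^x(dy)=S(x)$; the averaging principle applied to $\Xi(\xe,\ye)$ (itself resting on a secondary Poisson equation $\mathcal{L}^x\Psi=y_jy_k-J_{jk}(x)$, which is where the regularity assumed in $(\mathbf{A2})$ enters) gives $\intot\Xi(\xe,\ye)\,ds\to\intot S(X_s)\,ds$ in $L^p$. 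Together with the continuous-dependence limits of the remaining drift and stochastic terms and the $O(\ep)$ boundary piece, this recovers (\ref{level1limit}) and yields uniform-in-$t$ $L^p$ convergence $\xe\to X$.

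For the level-2 part, rewriting $d\xe_u=dR^\ep_u+\Xi(\xe_u,\ye_u)du+\mi(\xe_u)F(\xe_u)du+\mi(\xe_u)dW_u$ with $R^\ep_u:=-\ep\mi(\xe_u)\ye_u$ gives
\begin{equation*}
	\mathbb{X}^\ep_{s,t}=\int_s^t \xe_{s,u}\otimes dR^\ep_u+\int_s^t \xe_{s,u}\otimes \Xi(\xe_u,\ye_u)\,du+\int_s^t \xe_{s,u}\otimes\mi(\xe_u)F(\xe_u)\,du+\int_s^t \xe_{s,u}\otimes\mi(\xe_u)\,dW_u.
\end{equation*}
The three non-boundary integrals converge, by the averaging argument of the previous paragraph, the BDG inequality and uniform $L^p$ convergence $\xe\to X$, to $\int_s^t X_{s,u}\otimes dX_u$ in the It\^o sense. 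The novel ingredient is the first integral: since $\xe$ is absolutely continuous, integration by parts produces no It\^o correction, and after substituting $\tfrac{1}{\ep}\ye\otimes R^\ep=-\ye\otimes\mi(\xe)\ye$ one obtains
\begin{equation*}
	\int_s^t \xe_{s,u}\otimes dR^\ep_u=\xe_{s,t}\otimes R^\ep_t+\int_s^t \ye_u\otimes \mi(\xe_u)\ye_u\,du,
\end{equation*}
in which the explicit $\ep$ has cancelled. The boundary term is $O(\ep)$, and the remaining integral, using $\mathbb{E}_{\mu^x}[y_iy_k]=J_{ik}(x)$, averages to $\int_s^t J(X_u)\mi(X_u)^\top\,du$.

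To match (\ref{defofbbx}), apply the It\^o--Stratonovich identity $\int_s^tX_{s,u}\otimes dX_u=\int_s^tX_{s,u}\otimes\circ dX_u-\tfrac12\int_s^t\mi(X_u)\mi(X_u)^\top\,du$, and then invoke the algebraic identity $J\mi^\top+\mi J=\mi\mi^\top$, obtained from (\ref{lyapunov}) by left-multiplying by $\mi$ and right-multiplying by $\mi^\top$. A short computation then shows that the pointwise $L^p$ limit of $\mathbb{X}^\ep_{s,t}$ coincides exactly with $\mathbb{X}_{s,t}$ defined in (\ref{defofbbx}). Upgrading pointwise-in-$(s,t)$ $L^p$ convergence to convergence in the $\al$-H\"older rough-path metric $\rho_\al$ is a standard interpolation: the uniform-in-$\ep$ moment bounds in $\al'$-H\"older norm for any fixed $\al'\in(\al,\tfrac12)$ from Section~\ref{section-estimate}, combined with the pointwise $L^p$ convergence above, give convergence in $\rho_\al$ for every $1\le p<\infty$. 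The principal obstacle is the rigorous $L^p$ averaging for the quadratic and tensor-valued integrands $\Xi$ and $\ye\otimes\mi(\xe)\ye$ with uniform-in-$(s,t)$ error; this hinges on the solvability and $C^2$-regularity of the Poisson equations under $(\mathbf{A2})$ and on moment bounds on $\ye$ via the exponential stability of the frozen OU afforded by $(\mathbf{A1})$.
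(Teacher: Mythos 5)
Your proposal is correct and takes essentially the same approach as the paper: both exploit the identity $d\xe_t=-\ep\,d[\mi(\xe_t)\ye_t]+\Xi(\xe_t,\ye_t)\,dt+\mi(\xe_t)F(\xe_t)\,dt+\mi(\xe_t)\,dW_t$ (you derive it via the Poisson equation $\mathcal{L}^x\Phi=y$ with $\Phi=-\mi y$, the paper via direct rearrangement and integration by parts, which is the same calculation), then apply the averaging principle of Section~\ref{section-aver} to the quadratic-in-$y$ integrands, close with the Lyapunov identity $J\mi^\top+\mi J=\mi\mi^\top$, and upgrade pointwise $L^p$ convergence to $\rho_\al$ via uniform H\"older moment bounds and interpolation. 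The only inessential difference is that you carry $\mathbb{X}^\ep_{s,t}$ for general $(s,t)$ whereas the paper, invoking \cite[Theorem A.15]{FV10}, needs only $s=0$.
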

\begin{remark}
By Kolmogorov criterion for rough paths\cite[Theorem 3.1]{FH20}, one verifies that $\mathbf{X}$ lies in $\mathscr{C}^\al([0,T];\mathbb{R}^d)$ for all $\frac{1}{3}<\al<\frac{1}{2}.$
\end{remark}

An application of continuity of It\^o--Lyons map with careful computation, detailed in Section~\ref{section-proof}, yield the following theorem.
\begin{thm}\label{maintheorem2}
	Let $b\in C^3_b(\mathbb{R}^e;\mathbb{R}^e)$ and $\sigma\in C^3_b(\mathbb{R}^e;\mathbb{R}^{e\times d}).$ Consider random ODE
	$$dZ^\ep_t=b(Z^\ep_t)dt+\sigma(Z^\ep_t)dX^\ep_t,\quad Z^\ep_0=\xi.$$Under assumptions $(\mathbf{A1})$--$(\mathbf{A4})$,
	$Z^\ep \to Z$ in $C^\alpha([0,T];\mathbb{R}^e)$ in probability with $Z_0=\xi$ and
	\begin{eqnarray}
		&&dZ^i_t\nonumber\\&=&[b_i(Z_t)+\sum\limits_{j,k,l,k^\prime}\partial_{z_k}\sigma_{ij}(Z_t)\sigma_{kl}(Z_t)J_{lk^\prime}(X_t)\mi_{jk^\prime}(X_t)\nonumber\\&&+\sum_{j}\sigma_{ij}(Z_t)S_j(X_t)+\sum_{j,k}\sigma_{ij}(Z_t)M^{-1}_{jk}(X_t)F_k(X_t)]dt+\sum_{j,k}\sigma_{ij}(Z_t)M^{-1}_{jk}(X_t)dW^k_t,\nonumber
	\end{eqnarray}
	where $S_j$ is defined in (\ref{sjdef}).
\end{thm}
We end this section with a collection of frequently used notation.
\section*{Notation}
\begin{description}
	\item $|\cdot|$: Euclidean norm of either a vector or a matrix.
	
	\item $A^\top$: Transpose of a matrix or a vector $A$.
	
	\item $\mathrm{I}_d$: Identity matrix of $d\times d$.
	
	\item $(x,y)$: Inner product of two vectors $x$ and $y$.
	
	\item $Df, D^2 f$: Gradient and Hessian of a function $f:\mathbb{R}^d\to\mathbb{R}$.
	
	\item $C^k(U;V)$: The set consists of $k$-th continuously differentiable functions from $U$ to $V$, where $k\in\mathbb{N},$ and $U$, $V$ are finite-dimensional Banach spaces.
	
	\item $a\lesssim b$: There exists a constant $C>0$ such that $a\leq Cb.$ Throughout the paper, $C$ never depends on $\ep$. Additionally, in the proof of Proposition \ref{thmforcontinuity}, it neither depends on $s$ nor $t$.
\end{description}
	\section{Moment Estimates}\label{section-estimate}
	In this section, we give several moment estimates for system (\ref{SKxy}). 
\begin{prop}\label{thmformomentestimate}
	Under assumptions ($\mathbf{A1}$)--($\mathbf{A4}$), for each $1\leq p<\infty,$ $$\supe\supt\mathbb{E}|\ye_t|^p<\infty.$$
\end{prop}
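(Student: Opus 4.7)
I plan to establish the two bounds separately: the $\ye$-bound via an induction on even moments using It\^o's formula on $|\ye_t|^{2k}$, and the $\xe$-bound via an integration-by-parts identity that eliminates the singular $1/\ep$ factor in $d\xe_t=\ep^{-1}\ye_t\,dt$.

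For the $\ye$-moment estimate, I would apply It\^o to $|\ye_t|^{2k}$ in the system \eqref{SKxy}. The dissipativity $(y,M(x)y)\geq\lambda|y|^2$ from $(\mathbf{A1})$ together with the boundedness of $F$ in $(\mathbf{A4})$ yield
\begin{equation*}
d|\ye_t|^{2k}\leq\left[-\frac{2k\lambda}{\ep^2}|\ye_t|^{2k}+\frac{2k|F|_\infty}{\ep}|\ye_t|^{2k-1}+\frac{k(2k+d-2)}{\ep^2}|\ye_t|^{2k-2}\right]dt+dM_t,
\end{equation*}
where $M_t$ is a local martingale. A Young inequality of the form $|y|^{2k-1}/\ep\leq\delta|y|^{2k}/\ep^2+C_\delta$, valid for $\ep\leq 1$, absorbs the $\ep^{-1}$ drift into the dissipative term at the cost of an $\ep$-independent constant. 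I would then proceed by induction on $k$: assuming $\supe\supt\mathbb{E}|\ye_t|^{2k-2}\leq C_{k-1}$ is already known, taking expectations produces
\begin{equation*}
\frac{d}{dt}\mathbb{E}|\ye_t|^{2k}\leq-\frac{k\lambda}{\ep^2}\mathbb{E}|\ye_t|^{2k}+\frac{C_k}{\ep^2},
\end{equation*}
and Gr\"onwall's inequality gives $\supt\mathbb{E}|\ye_t|^{2k}\leq C_k/(k\lambda)$, uniformly in $\ep$. The base case $k=1$ needs no induction, and non-even exponents follow by Jensen.

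For the $\xe$-moment estimate, the naive bound from $d\xe_t=\ep^{-1}\ye_t\,dt$ is too singular, so I would first recast the dynamics. Multiplying the $\ye$-equation of \eqref{SKxy} by $-\mi(\xe_t)$ and using $\ep^{-2}\ye_t\,dt=\ep^{-1}d\xe_t$ produces
\begin{equation*}
d\xe_t=-\ep\,\mi(\xe_t)\,d\ye_t+\mi(\xe_t)F(\xe_t)\,dt+\mi(\xe_t)\,dW_t.
\end{equation*}
Since $\xe$ has finite variation, the cross variation $[\mi(\xe),\ye]$ vanishes, and It\^o's product rule on $\mi(\xe_t)\ye_t$ yields, after integration,
\begin{equation*}
\xe_t=-\ep\,\mi(\xe_t)\ye_t+\int_0^t A(\xe_s,\ye_s)\,ds+\int_0^t\mi(\xe_s)F(\xe_s)\,ds+\int_0^t\mi(\xe_s)\,dW_s,
\end{equation*}
where $A(x,y)_i:=\sum_{j,l}(\partial_l\mi)_{ij}(x)\,y_j y_l$ is quadratic in $y$. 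The integral $\int\mi F\,ds$ is pointwise bounded by $(\mathbf{A3})$--$(\mathbf{A4})$; the stochastic integral $\int\mi\,dW$ is controlled by BDG and the boundedness of $\mi$; and $\int A(\xe_s,\ye_s)\,ds$ is handled by combining the moment bounds of Part~1 on $\ye$ with $(\mathbf{A3})$ and a Gr\"onwall argument to close the coupling with $\xe$. For the boundary term $\ep\,\mi(\xe_t)\ye_t=\mi(\xe_t)P^\ep_t$, I would run the It\^o argument of Part~1 on $|P^\ep_t|^{2k}$ using equation \eqref{SKmp}: because the drift and noise coefficients of $P^\ep$ have no singular $\ep^{-1}$ prefactors (all $\ep$'s appear only in the dissipative part), a BDG step yields $\supe\mathbb{E}\supt|P^\ep_t|^p<\infty$, which is more than enough.

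The principal obstacle is Part~1: the singular quadratic-variation term $\ep^{-2}|\ye|^{2k-2}$ cannot be handled by any naive device, and can only be absorbed because the damping from $(\mathbf{A1})$ carries the matching singular factor $\ep^{-2}$. The induction on $k$ is essential, as it propagates the uniform bound at level $2k-2$ into one at level $2k$ and thereby closes the otherwise diverging cascade.
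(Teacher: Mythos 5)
Your $\ye$ argument coincides with the paper's: It\^o on $|\ye_t|^{2k}$, dissipativity from $(\mathbf{A1})$, the $\ep^{-1}$ drift absorbed by Young against the $\ep^{-2}$ damping, and the $\ep^{-2}|\ye|^{2k-2}$ quadratic-variation term closed by induction on $k$ via a comparison/Gr\"onwall step. That part is the same proof.

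For the $\xe$ bound your route is genuinely different from the paper's. The paper bypasses integration by parts entirely and instead invokes the Duhamel-type representation
$\xe_t = \ep^{-2}\intot\intos \zr_s\,dW_r\,ds + \ep^{-2}\intot\intos \zr_s F(\xe_r)\,dr\,ds$
from~\cite{SWW24}, where $\zr_s=\phie_s(\phie_r)^{-1}$ is the random propagator generated by $-\ep^{-2}M(\xe_\cdot)$; the whole estimate then rests on the exponential decay bound $|\zr_s|\le e^{-\ep^{-2}\lambda(s-r)}$, together with stochastic Fubini and BDG, and notably makes no use of the $\ye$-moment bound at all. You instead use the stochastic integration-by-parts identity $d\xe_t=-\ep\,\mi(\xe_t)\,d\ye_t+\mi(\xe_t)F(\xe_t)\,dt+\mi(\xe_t)\,dW_t$ (which, interestingly, is exactly equation~(\ref{expressionfordxe}) that the paper derives later, in Section~\ref{section-proof}, but for the purpose of identifying the limit rather than for moment bounds), and split the singular term into the boundary contribution $\ep\,\mi(\xe_t)\ye_t=\mi(\xe_t)P^\ep_t$ and the quadratic drift $\int_0^t A(\xe_s,\ye_s)\,ds$. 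Your plan then feeds the $\ye$-moments from Part~1 into the quadratic drift and runs a separate It\^o/BDG argument on $|P^\ep_t|^{2k}$ for the boundary term. This closes correctly: by $(\mathbf{A2})$--$(\mathbf{A3})$, $\mi$ is bounded with Lipschitz gradient, so a Landau--Kolmogorov argument gives $D\mi$ bounded, whence $|A(x,y)|\lesssim|y|^2$ and the Gr\"onwall fallback you mention is not even needed. Your version is more self-contained (no appeal to the propagator construction of~\cite{SWW24}) but more entangled, since it relies on both Part~1 and the auxiliary $P^\ep$-supremum bound, whereas the paper's $\xe$ estimate is independent of the $\ye$ estimate. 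Both are valid; the paper's yields cleaner control for the subsequent H\"older-increment estimates of Proposition~\ref{thmforcontinuity}, which reuse the same propagator representation.
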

\begin{proof}
	 For each $p\geq 2,$ set $\phi(y)=|y|^p,$ then
	$$D\phi(y)=p|y|^{p-2}y,\quad D^2\phi(y)=p(p-2)|y|^{p-4}yy^\top+p|y|^{p-2}\mathrm{I}_d.$$
	 Applying It\^o formula to $\phi(\ye_t)$,
	\begin{eqnarray}
		&&|\ye_t|^p\nonumber\\&=&\intot [D\phi(\ye_s)]^\top d\ye_s+\frac{1}{2}\intot\mathrm{Tr}\Big(\frac{1}{\ep^2}D^2\phi(\ye_s)\Big)ds\nonumber\\&=&-\frac{p}{\ep^2}\intot|\ye_s|^{p-2}(\ye_s)^\top M(\xe_s)\ye_sds+\frac{p}{\ep}\intot|\ye_s|^{p-2}(\ye_s)^\top F(\xe_s)ds+H^\ep_t\nonumber
		\\&&+\frac{1}{2\ep^2}\intot\mathrm{Tr}[p(p-2)|\ye_s|^{p-4}\ye_s(\ye_s)^\top+p|\ye_s|^{p-2}\mathrm{I}_d]ds,
	\end{eqnarray}
	 where $H^\ep$ is a martingale. Taking expectation, differentiating and applying $(\mathbf{A1})$ and $(\mathbf{A4})$,
	\begin{equation}\label{noname2}
		\frac{d}{dt}\mathbb{E}|\ye_t|^p\lesssim-\frac{1}{\ep^2}\mathbb{E}|\ye_t|^p+\frac{1}{\ep^2}\mathbb{E}|\ye_t|^{p-2}.
	\end{equation}
	First for  $p=2$, by the  comparison principle, 
	$$\supe\supt\mathbb{E}|\ye_t|^2<\infty.$$ By (\ref{noname2}) and induction,$$\supe\supt\mathbb{E}|\ye_t|^p<\infty.$$ for all even $p$, and hence for all $1\leq p<\infty.$
	\end{proof}

The next proposition concerns about H\"older norm of the path $\xe$ and its lift.
\begin{prop}\label{thmforcontinuity} Under assumptions ($\mathbf{A1}$)--($\mathbf{A4}$), there is a constant $C$ independent of $\ep$, such that
	
	\noindent(i) For each $1\leq p<\infty,$ $$\mathbb{E}|\xe_t-\xe_s|^p\leq C|t-s|^{p/2},\quad 0\leq s\leq t\leq T.$$
	
	\noindent(ii) For each $1\leq p<\infty,$ $$\mathbb{E}\Big|\int_{s}^{t}\xe_{s,u}\otimes d\xe_u\Big|^p\leq C|t-s|^p,\quad 0\leq s\leq t\leq T.$$
	\end{prop}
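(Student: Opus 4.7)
The plan is to exploit the variation-of-constants representation (\ref{expressforxm2}) for $\xe$ together with the exponential bound (\ref{boundonz}) on $\zr$. For part (i), I would write $\xe_t-\xe_s$ as four terms by splitting the $r$-domain in (\ref{expressforxm2}) into $[0,s]$ and $[s,t]$. The key pathwise bounds are $\left|\frac{1}{\epsilon^2}\int_s^t \zr_u\,du\right| \leq e^{-(s-r)/\epsilon^2}(1-e^{-(t-s)/\epsilon^2})$ for $r\in[0,s]$ and $\left|\frac{1}{\epsilon^2}\int_r^t \zr_u\,du\right|\leq 1$ for $r\in[s,t]$. A change of variables $w=(s-r)/\epsilon^2$ yields $\int_0^s\left[e^{-(s-r)/\epsilon^2}(1-e^{-(t-s)/\epsilon^2})\right]^2 dr \lesssim t-s$ uniformly in $\epsilon$, and trivially $\int_s^t 1\,dr=t-s$; Burkholder--Davis--Gundy then gives the $L^p$-bound $C(t-s)^{p/2}$ for the two stochastic pieces. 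The two drift pieces are stronger, since $F$ is bounded and both kernels are $O(1)$ in $L^\infty$, giving $(t-s)^p\leq C(t-s)^{p/2}$ because $t-s\leq T$.

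For part (ii), the idea is to eliminate the problematic $1/\epsilon$ factor in $d\xe_u = \frac{1}{\epsilon}\ye_u\,du$ by solving the $\ye$-equation of (\ref{SKxy}) for $\frac{1}{\epsilon}\ye_u\,du$, obtaining
\begin{equation*}
 d\xe_u = \mi(\xe_u)\,dW_u + \mi(\xe_u) F(\xe_u)\,du - \epsilon \mi(\xe_u)\,d\ye_u.
\end{equation*}
Inserting this into $\int_s^t \xe_{s,u}\otimes d\xe_u$ splits it into three pieces. The Wiener integral piece $\int_s^t \xe_{s,u}\otimes \mi(\xe_u)\,dW_u$ is bounded by $C(t-s)^p$ via Burkholder--Davis--Gundy, Lemma \ref{elementinequal} with $q=p/2$, and part (i); the drift piece is bounded by $C(t-s)^{3p/2}$ via Jensen and part (i). For the $d\ye$-piece, I would apply It\^o integration by parts (no cross-variation term since $\xe$ is of finite variation) to shift the differential onto $\xe_{s,u}\otimes \mi(\xe_u)$, producing a boundary contribution $\epsilon \xe_{s,t}\otimes \mi(\xe_t)\ye_t$ together with bulk integrals of the form $\int_s^t \ye_u\otimes \mi(\xe_u)\ye_u\,du$ and $\int_s^t \xe_{s,u}\otimes D\mi(\xe_u)[\ye_u]\ye_u\,du$, each bounded by $C(t-s)^p$ via Proposition \ref{thmformomentestimate}, part (i), and Jensen.

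The main obstacle is the boundary term $\epsilon \xe_{s,t}\otimes \mi(\xe_t)\ye_t$, whose $L^p$-norm is only of order $\epsilon(t-s)^{p/2}$ and so fails to attain the $(t-s)^p$ scaling uniformly when $\epsilon^2 > t-s$. I would resolve this by a case split. When $t-s\geq\epsilon^2$ one has $\epsilon\leq(t-s)^{1/2}$, and the boundary term is under the desired bound. When $t-s<\epsilon^2$, I would abandon the integration by parts and estimate directly by using $\xe_{s,u}=\frac{1}{\epsilon}\int_s^u \ye_v\,dv$:
\begin{equation*}
 \left|\int_s^t \xe_{s,u}\otimes d\xe_u\right|\leq \frac{1}{\epsilon^2}\left(\int_s^t|\ye_v|\,dv\right)^2 \leq \frac{t-s}{\epsilon^2}\int_s^t|\ye_v|^2\,dv \leq \int_s^t|\ye_v|^2\,dv,
\end{equation*}
where the last inequality uses $(t-s)/\epsilon^2\leq 1$ in this regime; Jensen and Proposition \ref{thmformomentestimate} then yield $C(t-s)^p$.
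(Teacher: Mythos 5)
Your part (i) follows the same route as the paper: both split the stochastic and drift parts of the variation-of-constants representation~(\ref{expressforxm2}) over the $r$-ranges $[0,s]$ and $[s,t]$, bound the deterministic kernels via~(\ref{boundonz}), and invoke Burkholder--Davis--Gundy. The bookkeeping differs slightly (you fold the $1/\ep^2$ into the kernel bounds before squaring, the paper keeps the $\ep^{2p}$ explicit), but the estimates are the same.

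Your part (ii) takes a genuinely different route. The paper stays with the $\zr$-kernel representation, writes $\int_s^t\xe_{s,u}\otimes d\xe_u$ as a double integral against $dW_r$ and $dr$, applies stochastic Fubini, and then grinds through the four resulting terms with Lemma~\ref{elementinequal} and part~(i); there is no $\ep$-dependent case distinction because the exponential kernel absorbs everything uniformly. You instead substitute the semimartingale decomposition $d\xe_u=\mi(\xe_u)dW_u+\mi(\xe_u)F(\xe_u)du-\ep\mi(\xe_u)d\ye_u$ (which is precisely~(\ref{expressionfordxe}), used by the paper only later in Section~5), integrate by parts in the $d\ye$ piece (correctly noting the vanishing cross-variation since $\xe$ has finite variation), and handle the boundary term $\ep\,\xe_{s,t}\otimes\mi(\xe_t)\ye_t$ by a case split on $t-s\lessgtr\ep^2$, using the crude $\int_s^t\xe_{s,u}\otimes d\xe_u=\ep^{-2}\int_s^t\int_s^u\ye_v\otimes\ye_u\,dv\,du$ bound in the short-interval regime. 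Both approaches are sound. The paper's kernel approach is self-contained and uniform, at the cost of lengthier Fubini/Minkowski manipulations; yours is more structural and anticipates the decomposition actually used to identify the limit, but pays with the extra case analysis forced by the $O(\ep(t-s)^{1/2})$ boundary term. One small point worth making explicit: your bound of $\int_s^t\xe_{s,u}\otimes D\mi(\xe_u)[\ye_u]\ye_u\,du$ needs $\partial_{x_l}\mi_{jk}$ bounded, which does hold since $\mi$ is globally Lipschitz by~($\mathbf{A3}$).
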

	\begin{proof}
		We give estimates by dichotomy.
		From (\ref{SKxy}), $$\ep d\ye_t=-\frac{1}{\ep}M(\xe_t)\ye_tdt+F(\xe_t)dt+dW_t=-M(\xe_t)d\xe_t+F(\xe_t)dt+dW_t.$$ Rearranging,
		\begin{eqnarray}\label{expressionfordxe}
			d\xe_t=\mi(\xe_t)dW_t+\mi(\xe_t)F(\xe_t)dt-\ep\mi(\xe_t)d\ye_t,
		\end{eqnarray} 
		
		{\bfseries Case 1:} $\ep\geq\sqrt{t-s}$. 
		
		By (\ref{SKxy}), H\"older inequality, Fubini theorem and Proposition \ref{thmformomentestimate},
		\begin{eqnarray}
			\mathbb{E}|\xe_t-\xe_s|^p&\leq&\frac{1}{\ep^p}\mathbb{E}\Big(\int_s^t|\ye_u|du\Big)^p\nonumber\\&\leq&\frac{1}{\ep^p}(t-s)^{p-1}\int_s^t\mathbb{E}|\ye_u|^pdu\nonumber\\&\leq&\frac{1}{\ep^p}(t-s)^p\leq (t-s)^{p/2}.
		\end{eqnarray}
		
		{\bfseries Case 2:} $\ep<\sqrt{t-s}$.

From (\ref{expressionfordxe}),
\begin{eqnarray}\label{incrementofxe}
	&&\xe_t-\xe_s\nonumber\\&=&\int_s^t \mi(\xe_u)dW_u+\int_s^t\mi(\xe_u)F(\xe_u)du-\ep\int_s^t\mi(\xe_u)d\ye_u\nonumber\\&=:&I^{\ep,1}(s,t)+I^{\ep,2}(s,t)+I^{\ep,3}(s,t).
\end{eqnarray}
By Burkholder--Davis--Gundy inequality and ($\mathbf{A4}$),
\begin{equation}\label{I1st}
	\mathbb{E}|I^{\ep,1}(s,t)|^p\lesssim\mathbb{E}\Big(\int_s^t|\mi(\xe_u)|^2du\Big)^{p/2}\lesssim (t-s)^{p/2}.
\end{equation}
\begin{equation}\label{I2st}
	\mathbb{E}|I^{\ep,2}(s,t)|^p\lesssim\mathbb{E}\Big(\int_s^t|\mi(\xe_u)F(\xe_u)|du\Big)^p\lesssim(t-s)^p\lesssim(t-s)^{p/2}.
\end{equation}
Applying integration-by-part formula and (\ref{SKxy}),
\begin{eqnarray}\label{preestforI3st}
	&&I^{\ep,3}_i(s,t)\nonumber\\&=&\ep\sum_j\int_s^t\mi_{ij}(\xe_u)d\yej_u\nonumber\\&=&\ep\sum_j[\mi_{ij}(\xe_t)\yej_t-\mi_{ij}(\xe_s)\yej_s]-\ep\sum_{j,l}\int_s^t\yej_ud\mi_{ij}(\xe_u)\nonumber\\&=&\ep\sum_j[\mi_{ij}(\xe_t)\yej_t-\mi_{ij}(\xe_s)\yej_s]-\ep\sum_{j,l}\int_s^t\yej_u\partial_{x_l}\mi_{ij}(\xe_u)d\xel_u\nonumber\\&=&\ep\sum_j[\mi_{ij}(\xe_t)\yej_t-\mi_{ij}(\xe_s)\yej_s]-\sum_{j,l}\int_s^t\yej_u\yel_u\partial_{x_l}\mi_{ij}(\xe_u)du\nonumber\\&=:&J^{\ep,1}_i(s,t)+J^{\ep,2}_i(s,t).
\end{eqnarray}
Since
\begin{equation*}
	J^{\ep,1}_i(s,t)=\ep\sum_j[\mi_{ij}(\xe_t)(\yej_t-\yej_s)+\yej_s(\mi_{ij}(\xe_t)-\mi_{ij}(\xe_s))],
\end{equation*}
an application of ($\mathbf{A3}$), Proposition \ref{thmformomentestimate} and the dichotomy assumption yields
\begin{eqnarray}\label{J1st}
		&&\mathbb{E}|J^{\ep,1}_i(s,t)|^p\nonumber\\&\lesssim&\ep^p\mathbb{E}(|\mi_{ij}(\xe_t)|^p|\yej_t-\yej_s|^p)+\ep^p\mathbb{E}(|\yej_s|^p|\mi_{ij}(\xe_t)-\mi_{ij}(\xe_s)|^p)\nonumber\\&\lesssim&\ep^p\leq(t-s)^{p/2}.
\end{eqnarray}
 By H\"older inequality, ($\mathbf{A3}$) and Proposition \ref{thmformomentestimate},
 \begin{eqnarray}\label{J2st}
 		\mathbb{E}|J^{\ep,2}_i(s,t)|^p&\lesssim&\mathbb{E}\Big(\int_s^t|\yej_u\yel_u\partial_{x_l}\mi_{ij}(\xe_u)|du\Big)^p\nonumber\\&\lesssim&\mathbb{E}\Big(\int_s^t|\yej_u\yel_u|du\Big)^p\nonumber\\&\leq&(t-s)^{p-1}\int_s^t\mathbb{E}|\yej_u\yel_u|^pdu\nonumber\\&\lesssim&(t-s)^p\lesssim(t-s)^{p/2}.
 \end{eqnarray}
 Combining (\ref{preestforI3st})--(\ref{J2st}),
 \begin{equation}\label{I3st}
 	\mathbb{E}|I^{\ep,3}_i(s,t)|^p\lesssim(t-s)^{p/2}.
 \end{equation}
 Combining (\ref{incrementofxe})--(\ref{I2st}) and (\ref{I3st}), we have
 \begin{equation}
 	\mathbb{E}|\xe_t-\xe_s|^p\lesssim (t-s)^{p/2},
 \end{equation}
 and this finishes the proof of (i). Next we turn to (ii).
 
 {\bfseries Case 1:} $\ep\geq\sqrt{t-s}$.
 
 From (\ref{SKxy}) we see that $$\mathbb{X}^\ep_{s,t}=\frac{1}{\ep}\int_s^t\xe_{s,u}\otimes\ye_udu.$$ By H\"older inequality, Proposition \ref{thmformomentestimate} and Proposition \ref{thmforcontinuity} (i),
 \begin{eqnarray}
 	\mathbb{E}|\mathbb{X}^\ep_{s,t}|^p&\leq&\frac{1}{\ep^p}(t-s)^{p-1}\int_s^t\mathbb{E}|\xe_{s,u}\otimes\ye_u|^pdu\nonumber\\&\leq&\frac{1}{\ep^p}(t-s)^{p-1}\int_s^t\sqrt{\mathbb{E}|\xe_{s,u}|^{2p}}\sqrt{\mathbb{E}|\ye_u|^{2p}}du\nonumber\\&\lesssim&\frac{1}{\ep^p}(t-s)^{p-1}\int_s^t(u-s)^{p/2}du\nonumber\\&\lesssim&\frac{1}{\ep^p}(t-s)^{3p/2}\leq(t-s)^p.
 \end{eqnarray}

 {\bfseries Case 2:} $\ep<\sqrt{t-s}$.
 
 By (\ref{expressionfordxe}),
 \begin{eqnarray}\label{preest-increment2}
 	&&\mathbb{X}^\ep_{s,t}\nonumber\\&=&\int_s^t\xe_{s,u}\otimes [\mi(\xe_u)dW_u]+\int_s^t \xe_{s,u}\otimes [\mi(\xe_u)F(\xe_u)du]-\ep\int_s^t\xe_{s,u}\otimes [\mi(\xe_u)d\ye_u]\nonumber\\&=:&I^{\ep,1}(s,t)+I^{\ep,2}(s,t)+I^{\ep,3}(s,t).
 \end{eqnarray}
 By Burkholder--Davis--Gundy inequality, ($\mathbf{A3}$)--($\mathbf{A4}$), H\"older inequality and Proposition \ref{thmforcontinuity} (i),
  \begin{eqnarray}\label{I1st2}
  	\mathbb{E}|I^{\ep,1}_{ij}(s,t)|^p&=&\mathbb{E}\Big|\sum_k\int_s^t\xei_{s,u}\mi_{jk}(\xe_u)dW^k_u\Big|^p\nonumber\\&\lesssim&\sum_k\mathbb{E}\Big(\int_s^t|\xei_{s,u}\mi_{jk}(\xe_u)|^2du\Big)^{p/2}\nonumber\\&\lesssim&\mathbb{E}\Big(\int_s^t|\xei_{s,u}|^2du\Big)^{p/2}\nonumber\\&\lesssim&(t-s)^{\frac{p}{2}-1}\int_s^t\mathbb{E}|\xei_{s,u}|^pdu\nonumber\\&\lesssim&(t-s)^{\frac{p}{2}-1}\int_s^t (u-s)^{p/2}du\lesssim(t-s)^p,
  \end{eqnarray}
  and
  \begin{eqnarray}\label{I2st2}
  	\mathbb{E}|I^{\ep,2}_{ij}(s,t)|^p&=&\mathbb{E}\Big|\sum_k\int_s^t\xei_{s,u}\mi_{jk}(\xe_u)F_k(\xe_u)du\Big|^p\nonumber\\&\lesssim&\mathbb{E}\Big(\int_s^t|\xei_{s,u}|du\Big)^p\nonumber\\&\leq&(t-s)^{p-1}\int_s^t\mathbb{E}|\xei_{s,u}|^pdu\nonumber\\&\lesssim&(t-s)^{p-1}\int_s^t (u-s)^{p/2}du\nonumber\\&\lesssim&(t-s)^{3p/2}\lesssim(t-s)^p.
  \end{eqnarray}
  By integration-by-part formula and (\ref{SKxy}),
  \begin{eqnarray}\label{preest-I3st2}
  	&&I^{\ep,3}_{ij}(s,t)\nonumber\\&=&-\ep\sum_k\int_s^t\xei_{s,u}\mi_{jk}(\xe_u)d\yek_u\nonumber\\&=&-\ep\sum_k\Big\{\xei_{s,t}\mi_{jk}(\xe_t)\yek_t-\int_s^t\yek_u d[\xei_{s,u}\mi_{jk}(\xe_u)]\Big\}\nonumber\\&=&-\ep\sum_k\xei_{s,t}\mi_{jk}(\xe_t)\yek_t+ \ep\sum_k\int_s^t\yek_u\mi_{jk}(\xe_u)d\xei_u+\ep\sum_{k}\int_s^t\yek_u\xei_{s,u}d[\mi_{jk}(\xe_u)]\nonumber\\&=&-\ep\sum_k\xei_{s,t}\mi_{jk}(\xe_t)\yek_t+\ep \sum_k\int_s^t\yek_u\mi_{jk}(\xe_u)d\xei_u+\ep\sum_{k,l}\int_s^t\yek_u\xei_{s,u}\partial{x_l}\mi_{jk}(\xe_u)d\xel_u\nonumber\\&=&-\ep\sum_k\xei_{s,t}\mi_{jk}(\xe_t)\yek_t+ \sum_k\int_s^t\yek_u\mi_{jk}(\xe_u)\yei_udu+\sum_{k,l}\int_s^t\yek_u\yel_u\xei_{s,u}\partial{x_l}\mi_{jk}(\xe_u)du\nonumber\\&=:&J^{\ep,1}_{ij}(s,t)+J^{\ep,2}_{ij}(s,t)+J^{\ep,3}_{ij}(s,t).
  \end{eqnarray}
  By H\"older inequality, ($\mathbf{A3}$) and Proposition \ref{thmformomentestimate},
  \begin{eqnarray}\label{J1st2}
  	\mathbb{E}|J^{\ep,1}_{ij}(s,t)|^p&\lesssim&\ep^p\mathbb{E}(|\xei_{s,t}|^p|\yek_t|^p)\nonumber\\&\lesssim&\ep^p\sqrt{\mathbb{E}|\xei_{s,t}|^{2p}}\sqrt{\mathbb{E}|\yek_t|^{2p}}\nonumber\\&\lesssim&\ep^p(t-s)^{p/2}\leq(t-s)^{p},
  \end{eqnarray}
  where we have used the dichotomy assumption in the last inequality.
  
  By ($\mathbf{A3}$), H\"older inequality and Proposition \ref{thmformomentestimate},
  \begin{eqnarray}\label{J2st2}
  	\mathbb{E}|J^{\ep,2}_{ij}(s,t)|^p&\lesssim&\mathbb{E}\Big(\int_s^t|\yek_u\yei_u|du\Big)^p\nonumber\\&\leq&(t-s)^{p-1}\int_s^t\mathbb{E}|\yek_u\yei_u|^pdu\lesssim(t-s)^p.
  \end{eqnarray}
  By ($\mathbf{A3}$) and Proposition \ref{thmformomentestimate},
  \begin{eqnarray}\label{J3st2}
  	\mathbb{E}|J^{\ep,3}_{ij}(s,t)|^p&\lesssim&\mathbb{E}\Big(\int_s^t|\yek_u\yel_u\xei_{s,u}|du\Big)^p\nonumber\\&\leq&(t-s)^{p-1}\int_s^t\mathbb{E}|\yek_u\yel_u\xei_{s,u}|^pdu\nonumber\\&\leq&(t-s)^{p-1}\int_s^t\sqrt{\mathbb{E}|\yek_u\yel_u|^{2p}}\sqrt{\mathbb{E}|\xei_{s,u}|^{2p}}du\nonumber\\&\lesssim&(t-s)^{p-1}\int_s^t(u-s)^{p/2}du\nonumber\\&\lesssim&(t-s)^{3p/2}\lesssim(t-s)^{p}.
  \end{eqnarray}
  Combining (\ref{preest-I3st2})--(\ref{J3st2}),
  \begin{equation}\label{I3st2}
  	\mathbb{E}|I^{\ep,3}_{ij}(s,t)|^p\lesssim|t-s|^p.
  \end{equation}
  Combining (\ref{preest-increment2})--(\ref{I2st2}) and (\ref{I3st2}),
  \begin{equation}
  	\mathbb{E}|\mathbb{X}^\ep_{s,t}|^p\lesssim|t-s|^p.
  \end{equation} 
  
	\end{proof}
	An immediate consequence of Proposition \ref{thmforcontinuity} and Kolmogorov criterion for rough paths \cite[Theorem 3.1]{FH20} is the following corollary.
	\begin{coro}\label{corononame}
		Under assumptions ($\mathbf{A1}$)--($\mathbf{A4}$), for all $1\leq p<\infty$ and $\frac{1}{3}<\al<\frac{1}{2},$
		$$\supe\mathbb{E}||\xe||_{\al}^p<\infty,\quad \supe\mathbb{E}||\mathbb{X}^\ep||_{2\al}^p<\infty.$$
	\end{coro}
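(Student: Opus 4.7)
The statement is a direct consequence of Proposition \ref{thmforcontinuity} together with the Kolmogorov criterion for rough paths \cite[Theorem 3.1]{FH20}; the only thing to verify is that the constants produced by Kolmogorov's criterion depend only on the moment bounds of the increments, which by Proposition \ref{thmforcontinuity} are uniform in $\epsilon$.

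Fix $\frac{1}{3}<\al<\frac{1}{2}$ and $1\leq p<\infty$. The Kolmogorov criterion in the rough-path form states that if, for some $q\geq 2$ and $\beta\in(\frac{1}{q},\frac{1}{2}]$, the increments of $(\xe,\mathbb{X}^\ep)$ satisfy
$$\bigl(\mathbb{E}|\xe_{s,t}|^q\bigr)^{1/q}\leq K|t-s|^\beta,\qquad \bigl(\mathbb{E}|\mathbb{X}^\ep_{s,t}|^{q/2}\bigr)^{2/q}\leq K|t-s|^{2\beta},$$
then for every $\al'<\beta-\frac{1}{q}$ one has
$$\mathbb{E}\bigl[\|\xe\|_{\al'}^{q}+\|\mathbb{X}^\ep\|_{2\al'}^{q/2}\bigr]\leq C_{\al',\beta,q,T}\,K^q.$$
Proposition \ref{thmformomentestimate}(i) supplies the first bound with $\beta=\frac{1}{2}$ and constant independent of $\epsilon$, while Proposition \ref{thmforcontinuity}(ii) supplies the second bound with exponent $2\beta=1$ and constant independent of $\epsilon$ as well.

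The strategy is then to choose the Kolmogorov exponent $q$ large enough to simultaneously (a) beat the desired H\"older regularity, i.e.\ $\frac{1}{2}-\frac{1}{q}>\al$, which forces $q>\frac{2}{1-2\al}$, and (b) control the prescribed moment $p$, i.e.\ $q\geq 2p$. Both requirements are met by taking any integer $q\geq\max\bigl(2p,\lceil\frac{2}{1-2\al}\rceil+1\bigr)$. With this choice, the Kolmogorov bound yields a constant $C=C(\al,p,T)$, independent of $\epsilon$, such that
$$\mathbb{E}\|\xe\|_{\al}^{q}+\mathbb{E}\|\mathbb{X}^\ep\|_{2\al}^{q/2}\leq C,\qquad 0<\epsilon\leq 1.$$
Since $q\geq p$ and $q/2\geq p$, Jensen's inequality downgrades these to the desired bounds for the $p$-th moments, and taking supremum over $\epsilon$ concludes the proof.

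There is no genuine obstacle here: all work has already been done in Proposition \ref{thmforcontinuity}, and the role of this corollary is only to package those increment estimates into bounds on the full H\"older rough-path seminorms. The single point to be careful about is checking that the constant in \cite[Theorem 3.1]{FH20} is universal in the prescribed scaling constant $K$ and does not secretly depend on a path-dependent quantity, which is indeed the case.
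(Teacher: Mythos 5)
Your proposal is correct and follows exactly the same route the paper intends: the paper states the corollary as an immediate consequence of Proposition \ref{thmforcontinuity} and the Kolmogorov criterion for rough paths, and you simply spell out how the criterion is applied (choosing $q$ large enough both to reach the prescribed H\"older exponent $\al<\frac{1}{2}-\frac{1}{q}$ and to dominate the prescribed moment $p$, then downgrading by Jensen). One small slip: you cite ``Proposition \ref{thmformomentestimate}(i)'' for the first increment bound, but you clearly mean Proposition \ref{thmforcontinuity}(i).
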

	\begin{remark}
	 (i) From Corollary \ref{corononame} and Kolmogorov tightness criterion for rough paths\cite[Theorem 3.10]{FH20}, we see that $(\mathbf{X}^\ep)_{0<\ep\leq 1}$ is tight in $\mathscr{C}^{0,\al}([0,T];\mathbb{R}^d).$
	 
	 (ii) Since for each $f\in C^\al([0,T];\mathbb{R}^d),$ $||f||_\infty\leq ||f||_\al T^\al+|f(0)|$, we automatically have
	 \begin{equation}\label{supnormforxe}
	 	\supe\mathbb{E}\supt|\xe_t|^p<\infty,\quad p\geq 1.
	 \end{equation}
	\end{remark}
	
\begin{remark}
	One might expect to show Proposition \ref{thmforcontinuity} (i) by integrating factor method in classical ODE theory, that is, write
	$$\sxm_t=\frac{1}{\ep^2}\intot\intos \zr_s dW_rds+\frac{1}{\ep^2}\intot\intos \zr_s F(\sxm_r) drds,$$
	where  $\zr_s=\phie_s(\phie_r)^{-1},\enspace  0\leq r\leq s\leq T$, and $\phi^\ep$ is the solution of the random ODE $$\dot{\phi}^\ep_t=-\frac{1}{\ep^2}M(\sxm_t)\phie_t,\enspace \phie_0=\mathrm{I}_d;$$
	 see \cite{SWW24} for detailed computation. However, this seems incorrect, since the integrand $r\mapsto\zr_s$ is not $\mathcal{F}_r$-adapted, let alone progressively measurable, so the integrand with respect to Brownian motion is not well-defined. This is also pointed out by Blassel~\cite{arxiv2602}.
	
\end{remark}
\section{Averaging Principle}\label{section-aver}
In this section, we explore the averaging principle for system (\ref{SKxy}). To be precise, we establish the following proposition.
\begin{prop}\label{propforaveraging}
	Let f be the form of either
	\begin{equation}\label{formoff}
		f(x,y)=x_iy_ky_lg(x),\quad x=(x_1,...,x_d)\in\mathbb{R}^d,\enspace y=(y_1,...,y_d)\in\mathbb{R}^d,
	\end{equation}
	or
	\begin{equation}\label{formoff2}
		f(x,y)=y_ky_lg(x),\quad x=(x_1,...,x_d)\in\mathbb{R}^d,\enspace y=(y_1,...,y_d)\in\mathbb{R}^d,
	\end{equation}
	where $g\in C^2(\mathbb{R}^d; \mathbb{R})$ is Lipschitz. Under assumptions ($\mathbf{A1}$)--($\mathbf{A4}$), for each $1\leq p<\infty,$
	$$\lim\limits_{\ep\to0}\mathbb{E}\Big|\intot f(\xe_s,\ye_s)-\bar{f}(\xe_s)ds\Big|^p=0,$$
	where $\bar{f}(x):=\int f(x,y)\nu^x(dy)$ and $\nu^x$ is the unique invariant measure of the following system:
	\begin{equation}\label{frozenprocess}
		dY^x_t=-M(x)Y^x_tdt+dW_t.
	\end{equation}
\end{prop}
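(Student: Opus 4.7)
The plan is to use the Poisson equation technique of Pardoux--Veretennikov. The frozen process (\ref{frozenprocess}), with $x$ treated as a parameter, is a linear Ornstein--Uhlenbeck process whose unique invariant law $\nu^x$ is the centered Gaussian with covariance $J(x)$ solving (\ref{lyapunov}); its infinitesimal generator acting on smooth test functions reads
$$\mathcal{L}_x\Phi(x,y)=\tfrac{1}{2}\Delta_y\Phi(x,y)-(M(x)y)^\top\nabla_y\Phi(x,y).$$

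First I would solve the cell problem
$$\mathcal{L}_x\Phi(x,y)=f(x,y)-\bar{f}(x).$$
Since $f$ is polynomial in $y$ of degree at most three in both (\ref{formoff}) and (\ref{formoff2}), and $\mathcal{L}_x$ sends polynomials in $y$ of degree $k$ into themselves while being invertible on the $\nu^x$-centered subspace, an explicit polynomial ansatz yields a solution $\Phi(x,y)$ of the same degree, with coefficients built algebraically from $g(x), M(x), \mi(x), J(x)$ (via (\ref{lyapunov}), which serves as the standard Sylvester-type consistency condition). Assumptions $(\mathbf{A1})$--$(\mathbf{A3})$ then guarantee that $\Phi,\partial_x\Phi,\partial_y\Phi,\partial_y^2\Phi$ are continuous in $(x,y)$ with polynomial-in-$y$ growth, and in case (\ref{formoff}) at most linear growth in $x$.

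Next I would apply It\^o's formula to $\Phi(\xe_t,\ye_t)$ along the trajectory of (\ref{SKxy}). The terms of order $\ep^{-2}$ combine into $\ep^{-2}\mathcal{L}_{\xe_t}\Phi(\xe_t,\ye_t)\,dt$, which by construction equals $\ep^{-2}[f(\xe_t,\ye_t)-\bar{f}(\xe_t)]\,dt$. Rearranging yields
\begin{align*}
\int_0^t[f(\xe_s,\ye_s)-\bar{f}(\xe_s)]\,ds&=\ep^2[\Phi(\xe_t,\ye_t)-\Phi(0,0)]-\ep\int_0^t\partial_x\Phi(\xe_s,\ye_s)\cdot\ye_s\,ds\\
&\quad-\ep\int_0^t\partial_y\Phi(\xe_s,\ye_s)\cdot F(\xe_s)\,ds-\ep\int_0^t\partial_y\Phi(\xe_s,\ye_s)\cdot dW_s.
\end{align*}
Taking the $L^p(\Omega)$ norm, applying Burkholder--Davis--Gundy to the stochastic integral, and combining the polynomial-in-$y$ growth of $\Phi$ and its derivatives with the uniform moment bounds of Proposition \ref{thmformomentestimate}, each of the four terms is controlled by $\ep$ times an $\ep$-independent constant, producing the desired bound of order $\ep^p$.

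The main obstacle I anticipate is the construction and quantitative control of $\Phi$: one has to write down the polynomial ansatz explicitly, determine its coefficients via (\ref{lyapunov}), and verify that they inherit two continuous derivatives in $x$ from $(\mathbf{A2})$ together with the Lipschitz and boundedness properties supplied by $(\mathbf{A3})$, so that the It\^o formula is legitimate. The prefactor $x_i$ in (\ref{formoff}) forces the use of the supremum moment bound for $\xe$ in Proposition \ref{thmformomentestimate}, and the cubic-in-$y$ growth of $\partial_x\Phi$ means one must call on $\sup_\ep\sup_t\mathbb{E}|\ye_t|^q$ for $q$ as large as $3p$, which is again supplied by the same proposition.
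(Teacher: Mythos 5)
Your proposal reproduces the paper's argument essentially verbatim: solve the frozen Poisson equation $\mathcal{L}^x\phi = -(f-\bar f)$ with an explicit polynomial-in-$y$ corrector built from the Lyapunov matrices, apply It\^o's formula to $\phi(\xe_t,\ye_t)$ so that the $\ep^{-2}$ terms cancel against $f-\bar f$, and absorb the remaining $O(\ep)$ terms via the polynomial-growth bounds on $\phi,\partial_x\phi,\partial_y\phi$ combined with Proposition \ref{thmformomentestimate}. The only slips are cosmetic bookkeeping (here $f$ and $\phi$ are quadratic, not cubic, in $y$, and with $g$ merely Lipschitz the $x$-growth of $\phi$ in case (\ref{formoff}) is quadratic rather than linear), but since all polynomial moments of $\xe$ and $\ye$ are uniformly bounded these inaccuracies do not affect the argument.
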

\begin{remark}
	One sees that the non-linearity $F$ vanishes in (\ref{frozenprocess}). Intuitively speaking, the reason is that, the fast system in (\ref{SKxy}) with frozen slow variable $$dY^{\ep,x}_t=-\frac{1}{\ep^2}M(x)Y^{\ep,x}_tdt+\frac{1}{\ep}F(x)dt+\frac{1}{\ep}dW_t,$$
	becomes
	\begin{equation}\label{tildeyex}
		d\tilde{Y}^{\ep,x}_t=-M(x)\tilde{Y}^{\ep,x}_tdt+\ep F(x)dt+d\tilde{W}_t
	\end{equation}
	after change-of-variable $\tilde{Y}^{\ep,x}_t:=Y^{\ep,x}_{\ep^2 t},$ where $\tilde{W}_t:=\frac{1}{\ep}W_{\ep^2 t}\overset{d}{=}W_t,$ and the term $\ep F(x)$ in (\ref{tildeyex}) disappears as $\ep$ goes to $0$.
\end{remark}
\begin{remark}\label{remarkforbarf}
	It is known that \cite[Proposition 3.5]{PAV14}, under assumption ($\mathbf{A1}$), equation (\ref{frozenprocess}) admits a unique invariant measure which is Gaussian: $\nu^x=\mathcal{N}(0,J(x))$, where $J$ is the solution of (\ref{lyapunov}). Consequently,
	\begin{equation}\label{explicitforbarf}
		\bar{f}(x)=x_ig(x)J_{kl}(x)
	\end{equation}
	if $f$ is of the form (\ref{formoff}) and
	\begin{equation}
		\bar{f}(x)=g(x)J_{kl}(x)
	\end{equation}
	if $f$ is of the form (\ref{formoff2}).
\end{remark}
\begin{remark}\label{remarkforlyapunov}
	It is known that \cite[Page 179]{BELLMAN}, under assumption ($\mathbf{A1}$), the Lyapunov equation
	$$M(x)A(x)+A(x)M^\top(x)=B(x)$$ admits a unique solution
	$$A(x)=-\int_0^\infty e^{-M^\top(x)t}B(x)e^{-M(x)t}dt.$$
	This in particular implies that
	\begin{eqnarray*}
		|A(x)|\leq\int_0^\infty |e^{-M^\top(x)t}|\cdot|B(x)|\cdot|e^{-M(x)t}|dt\leq\int_0^\infty e^{-2\lambda t}|B(x)|dt=\frac{1}{2\lambda}|B(x)|.
	\end{eqnarray*} 
	Therefore,  $A$ has polynomial growth whenever  $B$ does.
\end{remark}

\noindent\emph{Proof of Proposition \ref{propforaveraging}:}

 We only prove this proposition for $f$ of the form (\ref{formoff}), since the other case is similar and easier. For each $x\in\mathbb{R}^d$, denote the generator of SDE (\ref{frozenprocess}) by $\mathcal{L}^x,$ so that
\begin{equation}\label{generator}
	\mathcal{L}^x\phi(x,y)=(D_y\phi(x,y),-M(x)y)+\frac{1}{2}\mathrm{Tr}(D_{yy}\phi(x,y)),\quad\phi\in C^{2}(\mathbb{R}^d\times\mathbb{R}^d;\mathbb{R}).
\end{equation} Consider Poisson equation
\begin{equation}\label{poisson}
	-\mathcal{L}^x\phi(x,y)=f(x,y)-\bar{f}(x).
\end{equation}
From (\ref{explicitforbarf}) it is straightforward to check that
\begin{equation}\label{solutionofpoisson}
	\phi(x,y)=x_i g(x)(y^\top A(x)y-\mathrm{Tr}[A(x)J(x)]),
\end{equation}
where $J$ is the solution of (\ref{lyapunov}) and $A$ is the solution of the following Lyapunov equation
\begin{equation}\label{lyapunov2}
	M^\top(x)A(x)+A(x)M(x)=\frac{1}{2}(e_ke_l^\top+e_le_k^\top).
\end{equation}
 Here, $(e_n)_{1\leq n\leq d}$ is the orthonormal basis of $\mathbb{R}^d.$  As a consequence of ($\mathbf{A1}$)--($\mathbf{A3}$), $\phi\in C^{2}(\mathbb{R}^d\times\mathbb{R}^d;\mathbb{R}).$
By It\^o formula and (\ref{generator}),
\begin{eqnarray}
	&&d\phi(\xe_t,\ye_t)\nonumber\\&=&\Big(D_x\phi(\xe_t,\ye_t),\frac{1}{\ep}\ye_t\Big)dt+\Big(D_y\phi(\xe_t,\ye_t),-\frac{1}{\ep^2}M(\xe_t)\ye_t+\frac{1}{\ep}F(\xe_t)\Big)dt\nonumber\\&&+\frac{1}{2}\mathrm{Tr}\Big[\frac{1}{\ep^2}D^2_{yy}\phi(\xe_t,\ye_t)\Big]dt+\Big(\frac{1}{\ep}D_y\phi(\xe_t,\ye_t),dW_t\Big)\nonumber\\&=&\frac{1}{\ep}(D_x\phi(\xe_t,\ye_t),\ye_t)dt+\frac{1}{\ep}(D_y\phi(\xe_t,\ye_t),F(\xe_t))dt\\\nonumber&&+\frac{1}{\ep}(D_y\phi(\xe_t,\ye_t),dW_t)+\frac{1}{\ep^2}\mathcal{L}^{\xe_t}\phi(\xe_t,\ye_t)dt.
\end{eqnarray}
Integrating on $[0,t]$, rearranging and using (\ref{poisson}),
\begin{eqnarray}
	&&\intot f(\xe_s,\ye_s)-\bar{f}(\xe_s) ds\nonumber\\&=&-\intot\mathcal{L}^{\xe_s}\phi(\xe_s,\ye_s)ds\nonumber\\&=&-\ep^2[\phi(\xe_t,\ye_t)-\phi(0,0)]+\ep\intot(D_x\phi(\xe_s,\ye_s),\ye_s)ds\nonumber\\&&+\ep\intot(D_y\phi(\xe_s,\ye_s),F(\xe_s))ds+\ep\intot(D_y\phi(\xe_s,\ye_s),dW_s).
\end{eqnarray}
With the aid of Proposition \ref{thmformomentestimate}, the proof is finished by checking that $\phi$, $\phi_x$ and $\phi_y$ are of at most polynomial growth, that is, there exist constants $C>0$ and $q\geq 1$, such that for all $x,y\in\mathbb{R}^d$,
$$\max\{|\phi(x,y)|,|\phi_x(x,y)|,|\phi_y(x,y)|\}\leq C(1+|x|^q+|y|^q).$$
Differentiating on both sides of (\ref{solutionofpoisson}), 
\begin{equation}\label{phiy}
	\phi_y(x,y)=2x_i g(x)A(x)y,
\end{equation}
and
\begin{eqnarray}\label{phix}
	&&\phi_{x_j}(x,y)\nonumber\\&=&[\delta_{ij}g(x)+x_i\partial_{x_j}g(x)]\times[y^\top A(x)y-\mathrm{Tr}(A(x)J(x))]\nonumber\\&&+x_ig(x)\Big\{y^\top[\partial_{x_j}A(x)]y-\mathrm{Tr}[(\partial_{x_j}A(x))J(x)+A(x)\partial_{x_j}J(x)]\Big\}.
\end{eqnarray}
Differentiating on both sides of (\ref{lyapunov}) and (\ref{lyapunov2}), one finds that $\partial_{x_j}J(x)$ and $\partial_{x_j}A(x)$ satisfy Lyapunov equations
\begin{equation}
	M(x)\partial_{x_j}J(x)+\partial_{x_j}J(x)M^\top(x)=-([\partial_{x_j} M(x)]J(x)+J(x)[\partial_{x_j}M^\top(x)]).
\end{equation}
and
\begin{equation}
	M^\top(x)\partial_{x_j}A(x)+\partial_{x_j}A(x)M(x)=-([\partial_{x_j} M^\top(x)]A(x)+A(x)[\partial_{x_j}M(x)]),
\end{equation}
so the polynomial growth property is obtained by combining (\ref{solutionofpoisson}), (\ref{phiy}), (\ref{phix}) and Remark \ref{remarkforlyapunov}.

\qed

\section{Proof of Main Theorems}\label{section-proof}
With preparation above, we prove the main theorems now.

\noindent\emph{Proof of Theorem \ref{maintheorem}:}

By Corollary \ref{corononame} and interpolation theorem \cite[Theorem A.15]{FV10}, it suffices to prove the pointwise convergence, that is,
$$\xe_t\to X_t,\quad \text{in}\enspace L^p(\Omega,\mathcal{F},P)$$ and 
$$\mathbb{X}^\ep_{0,t}\to \mathbb{X}_{0,t},\quad \text{in}\enspace L^p(\Omega,\mathcal{F},P)$$
for each $0\leq t\leq T.$

From (\ref{expressionfordxe})
\begin{eqnarray}\label{expressxe}
	\xe_t&=&\Big[\intot\mi(\xe_s)dW_s+\intot\mi(\xe_s)F(\xe_s)ds\Big]-\ep\intot\mi(\xe_s)d\ye_s\nonumber\\&=:&\mathbf{I}(\ep,t)-\mathbf{J}(\ep,t).
\end{eqnarray}
Plainly,
\begin{eqnarray}\label{expressii}
	\mathbf{I}^i(\ep,t)=\sum_k\intot\mi_{ik}(\xe_s)dW^k_s+\sum_k\intot\mi_{ik}(\xe_s)F_k(\xe_s)ds.
\end{eqnarray}
From (\ref{preestforI3st}) we see that
\begin{eqnarray}\label{expressji}
	\mathbf{J}^i(\ep,t)=\ep\sum_k\mi_{ik}(\xe_t)\yek_t-\sum_{k,l}\intot\yek_s\yel_s\partial_{x_l}\mi_{ik}(\xe_s)ds.
\end{eqnarray}
Combining (\ref{expressxe}), (\ref{expressii}) and (\ref{expressji}),
\begin{eqnarray*}
	&&\xei_t\nonumber\\&=&\sum_k\intot\mi_{ik}(\xe_s)dW^k_s+\sum_k\intot\mi_{ik}(\xe_s)F_k(\xe_s)ds\nonumber\\&&+\sum_{k,l}\intot\yek_s\yel_s\partial_{x_l}\mi_{ik}(\xe_s)ds-\ep\sum_k\mi_{ik}(\xe_t)\yek_t.
\end{eqnarray*}
From (\ref{level1limit}),
$$X^i_t=\sum_k\intot\mi_{ik}(X_s)dW^k_s+\sum_k\intot\mi_{ik}(X_s)F_k(X_s)ds+\sum_{k,l}\intot J_{kl}(X_s)\partial_{x_l}\mi_{ik}(X_s)ds,$$
so
\begin{eqnarray}\label{esxexi}
	&&\xei_t-X^i_t\nonumber\\&=&\sum_k\intot[\mi_{ik}(\xe_s)-\mi_{ik}(X_s)]dW^k_s+\sum_k\intot\mi_{ik}(\xe_s)F_k(\xe_s)-\mi_{ik}(X_s)F_k(X_s)ds\nonumber\\&&-\ep\sum_k\mi_{ik}(\xe_t)\yek_t+\sum_{k,l}\intot[\yek_s\yel_s\partial_{x_l}\mi_{ik}(\xe_s)-J_{kl}(\xe_s)\partial_{x_l}\mi_{ik}(\xe_s)]ds\nonumber\\&&+\sum_{k,l}\intot [J_{kl}(\xe_s)\partial_{x_l}\mi_{ik}(\xe_s)-J_{kl}(X_s)\partial_{x_l}\mi_{ik}(X_s)]ds\nonumber\\&=:&\mathbf{K}_1(\ep,t)+\mathbf{K}_2(\ep,t)+\mathbf{K}_3(\ep,t)+\mathbf{K}_4(\ep,t)+\mathbf{K}_5(\ep,t).
\end{eqnarray}
By Burkholder--Davis--Gundy inequality, ($\mathbf{A3}$) and H\"older inequality
\begin{eqnarray}\label{esk1}
	&&\mathbb{E}|\mathbf{K}_1(\ep,t)|^p\nonumber\\&\lesssim&\sum_k\mathbb{E}\Big|\intot[\mi_{ik}(\xe_s)-\mi_{ik}(X_s)]dW^k_s\Big|^p\nonumber\\&\lesssim&\sum_k\mathbb{E}\Big(\intot |\mi_{ik}(\xe_s)-\mi_{ik}(X_s)|^2ds\Big)^{p/2}\nonumber\\&\lesssim&\intot\mathbb{E}|\xe_s-X_s|^pds.
\end{eqnarray}
Assumptions ($\mathbf{A3}$) and ($\mathbf{A4}$) imply that
\begin{eqnarray*}
	&&|\mathbf{K}_2(\ep,t)|\nonumber\\&\leq&\sum_k\intot|\mi_{ik}(\xe_s)|\cdot|F_k(\xe_s)-F_k(X_s)|+|F_k(X_s)|\cdot|\mi_{ik}(\xe_s)-\mi_{ik}(X_s)|ds\nonumber\\&\lesssim&\intot|\xe_s-X_s|ds.
\end{eqnarray*}
By H\"older inequality,
\begin{eqnarray}\label{esk2}
	\mathbb{E}|\mathbf{K}_2(\ep,t)|^p\lesssim\mathbb{E}\intot |\xe_s-X_s|^pds=\intot\mathbb{E}|\xe_s-X_s|^pds.
\end{eqnarray}
By the same argument,
\begin{eqnarray}\label{esk5}
	\mathbb{E}|\mathbf{K}_5(\ep,t)|^p\lesssim\intot\mathbb{E}|\xe_s-X_s|^pds.
\end{eqnarray}
Utilizing Proposition \ref{thmformomentestimate} and ($\mathbf{A3}$),
\begin{eqnarray}\label{esk3}
	\mathbb{E}|\mathbf{K}_3(\ep,t)|^p\lesssim\ep^p\sum_k\supt\mathbb{E}|\yek_t|^p\lesssim\ep^p.
\end{eqnarray}
Lastly, from Proposition \ref{propforaveraging} and Remark \ref{remarkforbarf}, there exists a function $O(\ep,t)$ such that
\begin{eqnarray}\label{esk4}
	\mathbb{E}|\mathbf{K}_4(\ep,t)|^p\lesssim O(\ep,t),
\end{eqnarray}
and that for each $t\in [0,T],$
$$\lim\limits_{\ep\to0} O(\ep,t)=0.$$
Combining (\ref{esxexi})--(\ref{esk4}), we see that
$$\mathbb{E}|\xei_t-X^i_t|^p\lesssim\intot\mathbb{E}|\xe_s-X_s|^pds+\ep^p+O(\ep,t).$$
Summing up with respect to $i$,
$$\mathbb{E}|\xe_t-X_t|^p\lesssim\intot\mathbb{E}|\xe_s-X_s|^pds+\ep^p+O(\ep,t).$$
By Gronwall inequality,
\begin{equation}\label{resultforlevel1convergence}
	\lim\limits_{\ep\to 0}\mathbb{E}|\xe_t-X_t|^p=0.
\end{equation}
This shows the convergence of $\xe$. By Proposition \ref{thmformomentestimate} and dominated convergence theorem,
\begin{equation}
	\lim\limits_{\ep\to 0}\mathbb{E}\Big|\intot\bar{f}(\xe_s)-\bar{f}(X_s)ds\Big|^p=0.
\end{equation}
In conjunction with Proposition \ref{propforaveraging},
\begin{equation}\label{averagingresult}
	\lim\limits_{\ep\to 0}\mathbb{E}\Big|\intot f(\xe_s,\ye_s)-\bar{f}(X_s)ds\Big|^p=0.
\end{equation}

Next, we turn to show the convergence of $\mathbb{X}^\ep.$ By (\ref{expressionfordxe}),
\begin{eqnarray}\label{level2path}
	&&\intot\xe_s\otimes d\xe_s\nonumber\\&=&\Big(\intot\xe_s\otimes[\mi(\xe_s)dW_s]+\intot\xe_s\otimes[\mi(\xe_s)F(\xe_s)]ds\Big)-\ep\intot\xe_s\otimes[\mi(\xe_s)d\ye_s]\nonumber\\&=:&\mathbf{A}(\ep,t)-\mathbf{B}(\ep,t).
\end{eqnarray}
We explore the limit of (\ref{level2path}) as $\ep\to 0$ componentwise. For $\mathbf{A}(\ep,t),$ by (\ref{resultforlevel1convergence}), Burkholder--Davis--Gundy inequality and dominated convergence theorem with the help of (\ref{supnormforxe}),
\begin{eqnarray}\label{aconvergence}
	&&\mathbf{A}^{ij}(\ep,t)\nonumber\\&=&\Big(\intot\xe_s\otimes[\mi(\xe_s)dW_s]\Big)_{ij}+\Big(\intot\xe_s\otimes[\mi(\xe_s)F(\xe_s)]ds\Big)_{ij}\nonumber\\&=&\intot\xei_s\times[\mi(\xe_s)dW_s]^j+\intot\xei_s\times[\mi(\xe_s)F(\xe_s)ds]^j\nonumber\\&=&\sum_k\intot\xei_s\mi_{jk}(\xe_s)dW^k_s+\sum_k\intot\xei_s\mi_{jk}(\xe_s)F_k(\xe_s)ds\nonumber\\&\to&\sum_k\intot X^i_s\mi_{jk}(X_s)dW^k_s+\sum_k\intot X^i_s\mi_{jk}(X_s)F_k(X_s)ds,
\end{eqnarray}
where the limit is in $L^p(\Omega,\mathcal{F},P).$ For $\mathbf{B}(\ep,t),$ we see from (\ref{preest-I3st2}) that
\begin{eqnarray}
	&&\mathbf{B}^{ij}(\ep,t)\nonumber\\&=&\ep\sum_k\xei_t\mi_{jk}(\xe_t)\yek_t-\sum_k\intot\yek_s\yei_s\mi_{jk}(\xe_s)ds-\sum_{k,l}\intot\yek_s\yel_s\xei_{s}\partial_{x_l}\mi_{jk}(\xe_s)ds\nonumber\\&=:&\mathbf{B}^{ij}_1(\ep,t)-\mathbf{B}^{ij}_2(\ep,t)-\mathbf{B}^{ij}_3(\ep,t).
\end{eqnarray}
By Proposition \ref{thmformomentestimate},  (\ref{supnormforxe}) and ($\mathbf{A3}$), $(\xei_t\mi_{jk}(\xe_t)\yek_t)_{0<\ep\leq 1}$ is bounded in $L^p(\Omega,\mathcal{F},P)$ for each $t\in [0,T],$ so that
\begin{equation}\label{b1convergence}
	\mathbf{B}_1^{ij}(\ep,t)\to 0 \quad\text{in}\quad L^p(\Omega,\mathcal{F},P).
\end{equation}
By (\ref{averagingresult}),
\begin{equation}\label{b21convergence}
	\mathbf{B}_{2}^{ij}(\ep,t)\to \sum_k\intot\mi_{jk}(X_s)J_{ik}(X_s)ds,
\end{equation}
and
\begin{equation}\label{b22convergence}
	\mathbf{B}_{3}^{ij}(\ep,t)\to \sum_{k,l}\intot X^i_s(\partial_{x_l}\mi_{jk})(X_s)J_{kl}(X_s)ds,
\end{equation}
where the two limits above are both in $L^p(\Omega,\mathcal{F},P).$
Combining (\ref{aconvergence}), (\ref{b1convergence}), (\ref{b21convergence}) and (\ref{b22convergence}), we conclude that
\begin{eqnarray}\label{componentconvergence}
	&&\Big(\intot\xe_s\otimes d\xe_s\Big)_{ij}\nonumber\\&\to&\sum_k\intot X^i_s\mi_{jk}(X_s)dW^k_s+\sum_k\intot X^i_s\mi_{jk}(X_s)F_k(X_s)ds\nonumber\\&&+\sum_k\intot\mi_{jk}(X_s)J_{ik}(X_s)ds+\sum_{k,l}\intot X^i_s(\partial_{x_l}\mi_{jk})(X_s)J_{kl}(X_s)ds,
\end{eqnarray}
in $L^p(\Omega,\mathcal{F},P).$ 

The proof is finished after verifying that the right hand side of (\ref{componentconvergence}) coincides with $\mathbb{X}^{ij}_{0,t},$ where $\mathbb{X}$ is defined via (\ref{defofbbx}). Let us calculate $\mathbb{X}^{ij}_{0,t}.$ Writing (\ref{level1limit}) componentwise, we see that
\begin{eqnarray*}
	&&\intot X^i_s dX^j_s\\&=&\sum_{k,l}\intot X^i_s (\partial_{x_l}\mi_{jk})(X_s)J_{kl}(X_s)ds+\sum_k\intot X^i_s\mi_{jk}(X_s)F_k(X_s)ds+\sum_k X^i_s\mi_{jk}(X_s)dW^k_s,
\end{eqnarray*}
and
\begin{eqnarray*}
	[X^i,X^j]_t=\sum_k\intot\mi_{ik}(X_s)\mi_{jk}(X_s)ds.
\end{eqnarray*}
However, multipling both sides of (\ref{lyapunov}) by $\mi$ on the left and $(\mi)^\top$ on the right handside and then writing it componentwise, we see that
$$\sum_k\intot\mi_{ik}(X_s)\mi_{jk}(X_s)ds=\sum_k\intot\mi_{ik}(X_s)J_{kj}(X_s)+J_{ik}(X_s)\mi_{jk}(X_s)ds.$$
Combining three equalities above,
\begin{eqnarray}
	&&\intot X^i_s\circ dX^j_s\nonumber\\&=&\sum_{k,l}\intot X^i_s (\partial_{x_l}\mi_{jk})(X_s)J_{kl}(X_s)ds+\sum_k\intot X^i_s\mi_{jk}(X_s)F_k(X_s)ds\nonumber\\&&+\sum_k\intot X^i_s\mi_{jk}(X_s)dW^k_s+\frac{1}{2}\sum_k\intot\mi_{ik}(X_s)J_{kj}(X_s)+J_{ik}(X_s)\mi_{jk}(X_s)ds\nonumber.
\end{eqnarray}
Plainly,
\begin{eqnarray*}
	&&\frac{1}{2}\Big(\intot J(X_s)(\mi)^\top(X_s)-\mi(X_s)J(X_s)ds\Big)_{ij}\\&=&\frac{1}{2}\sum_k\intot J_{ik}(X_s)\mi_{jk}(X_s)-\mi_{ik}(X_s)J_{kj}(X_s)ds.
\end{eqnarray*}
Therefore,
\begin{eqnarray}\label{finalresult}
	&&\Big(\intot X_s\otimes\circ dX_s\Big)_{ij}+\frac{1}{2}\Big(\intot J(X_s)(\mi)^\top(X_s)-\mi(X_s)J(X_s)ds\Big)_{ij}\nonumber\\&=&\sum_{k,l}\intot X^i_s (\partial_{x_l}\mi_{jk})(X_s)J_{kl}(X_s)ds+\sum_k\intot X^i_s\mi_{jk}(X_s)F_k(X_s)ds\nonumber\\&&+\sum_k\intot X^i_s\mi_{jk}(X_s)dW^k_s+\sum_k\intot J_{ik}(X_s)\mi_{jk}(X_s)ds,
\end{eqnarray}
which coincides with (\ref{componentconvergence}).\qed

Before demonstrating Theorem \ref{maintheorem2}, we discuss a slightly more general result about transferring RDE to SDE. Consider the following RDE $$dZ_t=b(Z_t)dt+\sigma(Z_t)d\mathbf{Y}_t,$$ where $\mathbf{Y}=(Y,\mathbb{Y})$ is a $\mathscr{C}^\al([0,T];\mathbb{R}^d)$-valued random element, $Y$ is a continuous semimartingale,  $$\mathbb{Y}^{ij}_{s,t}=\mathbb{Y}^{\text{It\^o},ij}_{s,t}+\int_s^t a^{ij}_udu,\quad\text{and}\quad \mathbb{Y}^{\text{It\^o},ij}_{s,t}:=\int_s^t Y^i_{s,u}dY^j_u,$$ the later integral being It\^o integral, and $a$ is a $C([0,T];\mathbb{R}^{d\times d})$-valued random element. Then for almost every sample path, we have
\begin{eqnarray}\label{generalrecover1}
	&&Z_t-Z_s\nonumber\\&=&b(Z_s)(t-s)+\sigma(Z_s)Y_{s,t}+D\sigma(Z_s)\sigma(Z_s)\mathbb{Y}_{s,t}+R_{s,t},
\end{eqnarray}
with $\lim\limits_{|\mathcal{P}|\to 0}\sup\limits_{\mathcal{P}\subset [0,T]}\sum\limits_{[s,t]\in\mathcal{P}}|R_{s,t}|=0,$ $P$-$\text{a.s.}$\,.
But
\begin{eqnarray}\label{generalrecover2}
	&&(D\sigma(Z_s)\sigma(Z_s)\mathbb{Y}_{s,t})_i\nonumber\\&=&\sum_{j,k,l}\partial_{z_k}\sigma_{ij}(Z_s)\sigma_{kl}(Z_s)\mathbb{Y}^{lj}_{s,t}\nonumber\\&=&\sum_{j,k,l}\partial_{z_k}\sigma_{ij}(Z_s)\sigma_{kl}(Z_s)\mathbb{Y}^{\text{It\^o},lj}_{s,t}+\sum_{j,k,l}\partial_{z_k}\sigma_{ij}(Z_s)\sigma_{kl}(Z_s)\int_s^t a^{lj}_udu\nonumber\\&=&\sum_{j,k,l}\partial_{z_k}\sigma_{ij}(Z_s)\sigma_{kl}(Z_s)\mathbb{Y}^{\text{It\^o},lj}_{s,t}+\sum_{j,k,l}\partial_{z_k}\sigma_{ij}(Z_s)\sigma_{kl}(Z_s)[a^{lj}_s(t-s)+\hat{R}_{s,t}],
\end{eqnarray}
where $\lim\limits_{|\mathcal{P}|\to 0}\sup\limits_{\mathcal{P}\subset [0,T]}\sum\limits_{[s,t]\in\mathcal{P}}|\hat{R}_{s,t}|=0,$ $P$-$\text{a.s.}$\,. Combining (\ref{generalrecover1}) and (\ref{generalrecover2}),
\begin{eqnarray}
	&&(Z_t-Z_s)_i\nonumber\\&=&[b_i(Z_s)+\sum_{j,k,l}\partial_{z_k}\sigma_{ij}(Z_s)\sigma_{kl}(Z_s)a^{lj}_s](t-s)\nonumber\\&&+\sum_{j}\sigma_{ij}(Z_s)Y^j_{s,t}+\sum_{j,k,l}\partial_{z_k}\sigma_{ij}(Z_s)\sigma_{kl}(Z_s)\mathbb{Y}^{\text{It\^o},lj}_{s,t}+\tilde{R}^i_{s,t},\nonumber
\end{eqnarray}
where $\lim\limits_{|\mathcal{P}|\to 0}\sup\limits_{\mathcal{P}\subset [0,T]}\sum\limits_{[s,t]\in\mathcal{P}}|\tilde{R}^i_{s,t}|=0,$ $P$-$\text{a.s.}$\,.
By equivalence theorem of RDE and SDE\cite[Proposition 6.9]{FZ18}, for each $1\leq i\leq d,$
\begin{equation}\label{generalresult}
	dZ^i_t=[b_i(Z_t)+\sum\limits_{j,k,l}\partial_{z_k}\sigma_{ij}(Z_t)\sigma_{kl}(Z_t)a^{lj}_t]dt+\sum_{j}\sigma_{ij}(Z_t)dY^j_t.
\end{equation}

\noindent\emph{Proof of Theorem \ref{maintheorem2}:}

By Theorem \ref{maintheorem} and a probabilistic version of continuity of It\^o--Lyons map\cite[Theorem 6.1]{FZ18}, $Z^\ep\to Z$ in $C^\alpha([0,T];\mathbb{R}^d)$ in probability, with 
$$dZ_t=b(Z_t)dt+\sigma(Z_t)d\mathbf{X}_t\,.$$
 Now we transfer this RDE to SDE. By the same argument leading to (\ref{finalresult}), we have
\begin{eqnarray}\label{finalresult2}
	&&\mathbb{X}^{ij}_{s,t}\nonumber\\&=&\Big(\int_s^t X_u\otimes\circ dX_u\Big)_{ij}+\frac{1}{2}\Big(\int_s^t J(X_u)(\mi)^\top(X_u)-\mi(X_u)J(X_u)du\Big)_{ij}\nonumber\\&=&\sum_{k,l}\int_s^t X^i_u S_j(X_u)du+\sum_k\int_s^t X^i_u\mi_{jk}(X_u)F_k(X_u)du\nonumber\\&&+\sum_k\int_s^t X^i_u\mi_{jk}(X_u)dW^k_u+\sum_k\int_s^t J_{ik}(X_u)\mi_{jk}(X_u)du\nonumber\\&=&\mathbb{X}^{\text{It\^o},ij}_{s,t}+\sum_k\int_s^t J_{ik}(X_u)\mi_{jk}(X_u)du.
\end{eqnarray}
Applying (\ref{generalresult}),
	\begin{eqnarray}
	&&dZ^i_t\nonumber\\&=&[b_i(Z_t)+\sum\limits_{j,k,l,k^\prime}\partial_{z_k}\sigma_{ij}(Z_t)\sigma_{kl}(Z_t)J_{lk^\prime}(X_t)\mi_{jk^\prime}(X_t)]dt+\sum_{j}\sigma_{ij}(Z_t)dY^j_t\nonumber\\&=&[b_i(Z_t)+\sum\limits_{j,k,l,k^\prime}\partial_{z_k}\sigma_{ij}(Z_t)\sigma_{kl}(Z_t)J_{lk^\prime}(X_t)\mi_{jk^\prime}(X_t)]dt\nonumber\\&&+[\sum_{j}\sigma_{ij}(Z_t)S_j(X_t)+\sum_{j,k}\sigma_{ij}(Z_t)M^{-1}_{jk}(X_t)F_k(X_t)]dt+\sum_{j,k}\sigma_{ij}(Z_t)M^{-1}_{jk}(X_t)dW^k_t.\nonumber
	\end{eqnarray}\qed

	\bibliographystyle{abbrv}
\end{document}